\tikzset{
modal/.style={>=stealth',shorten >=1pt,shorten <=1pt,auto,
node distance=1.5cm,semithick},
world/.style={circle,draw,minimum size=1cm},
point/.style={circle,draw,fill=black,inner sep=0.5mm},
reflexive/.style={->,in=120,out=60,loop,looseness=#1},
reflexive/.default={5},
reflexive point/.style={->,in=135,out=45,loop,looseness=#1},
reflexive point/.default={25},
}
\newcommand{\Ltriangle}{\mathcal{L}_\blacktriangle}
\newcommand{\Lbox}{\mathcal{L}_\Box}
\newtheorem{theorem}{Theorem}
\newtheorem{corollary}{Corollary}
\newtheorem{lemma}{Lemma}
\theoremstyle{definition}
\newtheorem{definition}{Definition}
\theoremstyle{remark}
\newtheorem{remark}{Remark}
\newtheorem{example}{Example}
\newtheorem{convention}{Convention}
\newcommand{\AFDE}{\mathbf{K}^\blacktriangle_\mathbf{FDE}}
\title{Non-contingency in a~paraconsistent setting}
\begin{document}
\providecommand{\keywords}[1]{\small\textbf{Keywords: } #1}
\title{Non-contingecy in a~paraconsistent setting\thanks{The research of the first author was funded by the grant ANR JCJC 2019, project PRELAP (ANR-19-CE48-0006). The authors wish to thank two anonymous reviewers for their helpful comments and remarks.\\\indent This is a~preprint version of the following paper:~\href{https://doi.org/10.1093/jigpal/jzac081}{doi: 10.1093/jigpal/jzac081}.}}
\author[1]{Daniil Kozhemiachenko}
\affil[1]{INSA Centre Val de Loire, Univ.\ Orl\'{e}ans, LIFO EA 4022, France\\\href{mailto:daniil.kozhemiachenko@insa-cvl.fr}{daniil.kozhemiachenko@insa-cvl.fr}}
\author[2]{Liubov Vashentseva}
\affil[2]{Department of Logic, Faculty of Philosophy, Lomonosov Moscow State University, Moscow 119991, Russia\\\href{mailto:vashentsevaliubov@gmail.com}{vashentsevaliubov@gmail.com}}
\maketitle
\begin{abstract}
We study an extension of First Degree Entailment (FDE) by Dunn and Belnap with a~non-contingency operator $\blacktriangle\phi$ which is construed as ‘$\phi$ has the same value in all accessible states’ or ‘all sources give the same information on the truth value of $\phi$’. We equip this logic dubbed $\AFDE$ with frame semantics and show how the bi-valued models can be interpreted as interconnected networks of Belnapian databases with the $\blacktriangle$ operator modelling search for inconsistencies in the provided information. We construct an analytic cut system for the logic and show its soundness and completeness. We prove that $\blacktriangle$ is not definable via the necessity modality $\Box$ of $\mathbf{K_{FDE}}$. Furthermore, we prove that in contrast to the classical non-contingency logic, reflexive, $\mathbf{S4}$, and $\mathbf{S5}$ (among others) frames \emph{are definable}.

\vspace{.5em}

\noindent
\keywords{First Degree Entailment; non-contingency logic; analytic cut; expressivity; frame correspondence.}
\end{abstract}
\maketitle
\begin{spacing}{1}
\section{Introduction\label{introduction}}
\subsection{Classical logics of (non-)contingency}
Logics of (non-)contingency extend the language of propositional logic with the operator $\triangle$. If we employ Kripke semantics, $\triangle\phi$ is then considered to be true at some state $w$ iff
\begin{itemize}
\item $\phi$ is true in all accessible states or false in all accessible states;
\item $\phi$ \emph{has the same truth value} in all accessible states.
\end{itemize}
Note that these two conditions are equivalent in classical logic.

Depending on the intended interpretation, $\triangle\phi$ can be understood as ‘$\phi$ is not contingent’, ‘I~know whether $\phi$ is true’ (in the epistemic context), or ‘I have an opinion regarding the truth-value of $\phi$’ (in the doxastic context). Classical non-contingency logics --- in particular, their proof theory and model theory --- have been attracting attention for quite some time~\cite{Humberstone1995,Kuhn1995,Zolin1999,Zolin2002,Humberstone2013,Costa-Leite2016,Fan2019}. They are well motivated and can be applied to solve several epistemic puzzles (cf.~\cite[\S\S1,8]{FanWangvanDitmarsch2015} for more details regarding the use of the ‘knowing whether’ operator).

However, non-contingency logics suffer from an unfortunate drawback. It is known that numerous useful first-order properties on Kripke frames can be elegantly defined via modal formulas with $\Box$. On the other hand, many of them cannot be defined using $\triangle$-formulas. This is due to results by Zolin~\cite[Lemma~4.6]{Zolin1999} that every $\triangle$-definable class of frames contains the class of
partial-functional frames\footnote{I.e., frames where each state has at most one successor. Zolin calls such frames ‘functional’.}. Even more so, all partial-functional frames are equivalent w.r.t.\ classical $\triangle$-formulas as shown in~\cite[Proposition 3.7]{FanWangvanDitmarsch2015}.

A straightforward corollary is that (among others) serial, reflexive, symmetric, transitive, and Euclidean frames are not definable with $\triangle$-formulas, although they are definable with $\Box$-formulas.
\subsection{Modal logics based upon First Degree Entatilment and its relatives}\label{subsec:modalfde}
First Degree Entailment ($\mathbf{FDE}$) is a~paraconsistent logic over the $\{\neg,\wedge,\vee\}$ language formulated by Dunn and Belnap in a~series of papers~\cite{Dunn1976,Belnap1977computer,Belnap1977fourvalued}. One of its main ideas was to retain classical truth and falsity conditions but treat the truth and falsity of propositions independently. In particular,
\begin{center}
\begin{tabular}{c|c|c}
&\textbf{is true when}&\textbf{is false when}\\\hline
$\neg\phi$&$\phi$ is false&$\phi$ is true\\
$\phi_1\wedge\phi_2$&$\phi_1$ and $\phi_2$ are true&$\phi_1$ is false or $\phi_2$ is false\\
$\phi_1\vee\phi_2$&$\phi_1$ is true or $\phi_2$ is true&$\phi_1$ and $\phi_2$ are false
\end{tabular}
\end{center}
Thus, any proposition $\phi$ could be not only true or false but also have both values (i.e., a~truth value ‘glut’ --- both true and false) or have no value (a truth value ‘gap’ --- neither true nor false). This is why there are no theorems in the $\mathbf{FDE}$. However, sequents of the form $\phi\vdash\chi$ (‘first-degree entailments’, whence the name of the logic, or ‘formula-formula’ sequents) where $\phi$ and $\chi$ are formulas in the $\{\neg,\wedge,\vee\}$ language are valid if whenever $\phi$ is true, then $\chi$ is true too. An equivalent notion of validity could be formulated via the preservation of non-falsity: if $\phi$ is not false, then so is $\chi$.
\begin{remark}
In~\cite{Belnap1977fourvalued,Belnap1977computer}, $\mathbf{FDE}$ is formulated as a~four-valued logic with truth table semantics where each value from $\{\mathbf{T},\mathbf{F},\mathbf{B},\mathbf{N}\}$ represents what a~computer or a~database might be told regarding a~given statement.
\begin{itemize}
\item $\mathbf{T}$ stands for ‘just told True’.
\item $\mathbf{F}$ stands for ‘just told False’.
\item $\mathbf{B}$ (or \textbf{Both}) stands for ‘told both True and False’.
\item $\mathbf{N}$ (or \textbf{None}) stands for ‘told neither True nor False’.
\end{itemize}
\end{remark}

$\mathbf{FDE}$ has well-studied modal expansions (cf.~e.g.~\cite{Priest2008FromIftoIs,Priest2008,OdintsovWansing2017,Drobyshevich2020}). They usually employ frame semantics and use either Hilbert-style or tableaux calculi for their proof theory. There is also work on the correspondence theory for expansions of $\mathbf{FDE}$ with $\Box$ modality and (or) some implication (cf., e.g.~\cite{RivieccioJungJansana2017,Drobyshevich2020}).

To the best of our knowledge, however, there is no work done on the expansions of $\mathbf{FDE}$ with (non-)contingency modalities. Thus, there is a~gap between the classical logic on the one hand and $\mathbf{FDE}$ on the other. Many frame properties are not definable with the classical $\triangle$-formulas and there is no work done on paraconsistent non-contingency logics. In this paper, we try to fill in this gap.
\subsection{Motivation and plan of the paper}
Our motivation and our goal thus come from two sources. The first one is the classical non-contingency logic. The second one is modal expansions of the First Degree Entailment.

We are going to introduce an expansion of $\mathbf{FDE}$ dubbed $\AFDE$ with the non-contingency modality $\blacktriangle\phi$\footnote{We reserve $\triangle$ for the classical non-contingency operator so as to avoid confusion.} which we will informally interpret as ‘the agent knows the truth value of $\phi$’ following~\cite{FanWangvanDitmarsch2015}, ‘the truth value of $\phi$ is the same in all accessible states’, or ‘all available sources give the same information regarding $\phi$’. We will as well show that some of the frame conditions undefinable with classical $\triangle$-formulas are in fact definable with ‘formula-formula’ $\AFDE$ sequents. Thus, we will mend the gap mentioned above.

The remainder of the paper is structured as follows. In \S\ref{sec:languageandsemantics}, we present the language which we call $\Ltriangle$ as well as semantics for the expansion of $\mathbf{FDE}$ with the non-contingency modality. We motivate our semantics for $\blacktriangle$ and provide several contexts in which our semantics can be used.

In \S\ref{sec:proofsystem}, we present an analytic cut system for $\AFDE$ and then show its soundness and completeness. As a~corollary of completeness, we obtain the subformula property.

In \S\ref{sec:expressivity}, we deal with the expressivity of $\Ltriangle$. In particular, we prove that, in contrast to the classical non-contingency logic, $\blacktriangle$ cannot be defined using $\Box$ from $\mathbf{K_{FDE}}$. Neither can $\Box$ be defined via $\blacktriangle$.

In \S\ref{sec:framedefinability}, we prove the definability of several frame classes via finite sets of $\AFDE$ sequents. In particular, we show that reflexive ($\mathbf{T}$) and preordered ($\mathbf{S4}$) frames as well as the frames whose accessibility relation is an equivalence relation ($\mathbf{S5}$ frames) are definable in contrast to classical non-contingency logic.

Finally, in~\S\ref{sec:conclusion}, we recapitulate our results and set the goals for future research.
\section{Language and semantical framework\label{sec:languageandsemantics}}
The formulas of $\Ltriangle$ are built from the countable set of propositional variables $\mathsf{Var}=\{p,q,r,\ldots\}$ according to the following grammar in Backus--Naur form:
\[\phi\!\coloneqq\!p\in\mathsf{Var}\mid\neg\phi\mid\phi\wedge\phi\mid\phi\vee\phi\mid\blacktriangle\phi\]
We will denote the set of variables occurring in $\phi$ via $\mathsf{Var}(\phi)$.
\subsection{Interpretation of connectives}\label{subsec:interpretationofconnectives}
We follow Odintsov's and Wansing's~\cite{OdintsovWansing2010,OdintsovWansing2017} presentation of semantics of non-classical modal logics which uses \emph{two} valuations on a~frame --- $v^+$ (support of truth) and $v^-$ (support of falsity). Note however, that it is possible to produce an equivalent semantics based on models with one valuation (assigning one value from $\{\mathbf{T},\mathbf{B},\mathbf{N},\mathbf{F}\}$) as done by Priest~\cite{Priest2008FromIftoIs,Priest2008}.
\begin{definition}[Semantics]\label{def:AFDEsemantics}
A \emph{frame} is a~tuple $\mathfrak{F}=\langle W,R\rangle$ with $W\neq\varnothing$, $R$ being a~binary accessibility relation on $W$. A~\emph{model} is a~tuple $\mathfrak{M}=\langle W,R,v^+,v^-\rangle$ with $\langle W,R\rangle$ being a~frame and $v^+$ and $v^-$~being maps from $\mathsf{Var}$ to $2^W$ interpreted as support of truth and support of falsity, respectively. If $w\in\mathfrak{M}$, a~tuple $\langle\mathfrak{M},w\rangle$ is called a~\emph{pointed model}.

The semantics of propositional formulas is defined inductively as usual.
\begin{itemize}
\item $\begin{matrix}\mathfrak{M},w\vDash^+p&\Leftrightarrow&w\in v^+(p)\\\mathfrak{M},w\vDash^-p&\Leftrightarrow&w\in v^-(p)\end{matrix}$
\item $\begin{matrix}\mathfrak{M},w\vDash^+\neg\phi&\Leftrightarrow&\mathfrak{M},w\vDash^-\phi\\\mathfrak{M},w\vDash^-\neg\phi&\Leftrightarrow&\mathfrak{M},w\vDash^+\phi\end{matrix}$
\item $\begin{matrix}\mathfrak{M},w\vDash^+\phi_1\wedge\phi_2&\Leftrightarrow&\mathfrak{M},w\vDash^+\phi_1\text{ and }\mathfrak{M},w\vDash^+\phi_2\\\mathfrak{M},w\vDash^-\phi_1\wedge\phi_2&\Leftrightarrow&\mathfrak{M},w\vDash^-\phi_1\text{ or }\mathfrak{M},w\vDash^-\phi_2\end{matrix}$
\item $\begin{matrix}\mathfrak{M},w\vDash^+\phi_1\vee\phi_2&\Leftrightarrow&\mathfrak{M},w\vDash^+\phi_1\text{ or }\mathfrak{M},w\vDash^+\phi_2\\\mathfrak{M},w\vDash^-\phi_1\vee\phi_2&\Leftrightarrow&\mathfrak{M},w\vDash^-\phi_1\text{ and }\mathfrak{M},w\vDash^-\phi_2\end{matrix}$
\end{itemize}
To make the presentation of the semantics for $\blacktriangle$ more concise we introduce the following conditions.
\begin{equation}
\tag{$t_1\blacktriangle$}\label{t1conditionI}
\begin{array}{rc}
\forall w_1,\!w_2\!:\!R(w_0,w_1)~\&~R(w_0,w_2)&\Rightarrow\\(\mathfrak{M},w_1\!\vDash^+\!\phi\!\Rightarrow\!\mathfrak{M},w_2\!\vDash^+\!\phi)~\&~(\mathfrak{M},w_1\!\vDash^-\!\phi\!\Rightarrow\!\mathfrak{M},w_2\vDash^-\phi)
\end{array}
\end{equation}
\begin{equation}
\tag{$t_2\blacktriangle$}\label{t2conditionI}
\forall w_1:R(w_0,w_1)\Rightarrow\mathfrak{M},w_1\vDash^+\phi\text{ or }\mathfrak{M},w_1\vDash^-\phi
\end{equation}
\begin{equation}
\tag{$f_1\blacktriangle$}\label{f1conditionI}
\exists w_1,w_2:R(w_0,w_1)~\&~R(w_0,w_2)~\&~\mathfrak{M},w_1\vDash^+\phi~\&~\mathfrak{M},w_2\nvDash^+\phi
\end{equation}
\begin{equation}
\tag{$f_2\blacktriangle$}\label{f2conditionI}
\exists w_1,w_2:R(w_0,w_1)~\&~R(w_0,w_2)~\&~\mathfrak{M},w_1\vDash^-\phi~\&~\mathfrak{M},w_2\nvDash^-\phi
\end{equation}
\begin{equation}
\tag{$f_3\blacktriangle$}\label{fconditionS}
\exists w_1,w_2:R(w_0,w_1)~\&~R(w_0,w_2)~\&~\mathfrak{M},w_1\vDash^+\phi~\&~\mathfrak{M},w_2\vDash^-\phi
\end{equation}

In light of these conditions, support of truth and support of falsity of $\blacktriangle$ is defined as follows.
\begin{itemize}
\item $\begin{matrix}\mathfrak{M},w_0\vDash^+\blacktriangle\phi&\Leftrightarrow&\eqref{t1conditionI}\text{ and }\eqref{t2conditionI}\\\mathfrak{M},w_0\vDash^-\blacktriangle\phi&\Leftrightarrow&\eqref{f1conditionI}\text{ or }\eqref{f2conditionI}\text{ or }\eqref{fconditionS}\end{matrix}$
\end{itemize}
\end{definition}

In what follows, we are going to use the following definition of validity via truth preservation\footnote{As we will see in Theorem~\ref{theorem:AFDEcontraposition}, we could equivalently define the validity via the \emph{non-falsity preservation}. However, it is customary to give definitions of validity and entailment via the truth preservation for extensions and expansions of $\mathbf{FDE}$.}.
\begin{definition}\label{def:validity}
Let $\mathfrak{F}$ be a~frame. $\phi\vdash\chi$ is \emph{valid on $\mathfrak{F}$} iff for any model $\mathfrak{M}$ on $\mathfrak{F}$, and for any $w\in\mathfrak{M}$, if $\mathfrak{M},w\vDash^+\phi$, then $\mathfrak{M},w\vDash^+\chi$.

$\phi\vdash\chi$ is \emph{valid} iff it is valid on every frame.
\end{definition}
\begin{remark}\label{rem:LETFcomparison1}
It is instructive to note\footnote{We are grateful to the handling editor for bringing this to our attention.} that the semantics of $\blacktriangle$ bears significant similarities to the semantics of the classicality operator $\circ$ and its dual non-classicality operator $\bullet$ of $\mathit{LET}_F$ as described in~\cite[Definitions~2 and~7]{AntunesCarnielliKapsnerRodriguez2020}. Furthermore, $\circ\phi$ is interpreted as ‘the information on $\phi$ is reliable’ which is also similar to how we interpret $\blacktriangle$ (cf.~Examples~\ref{ex:witnesses} and~\ref{ex:Belnapnetwork}).

However, there are several notable differences between $\AFDE$ on the one hand and $\mathit{LET}_F$ on the other. First of all, $\AFDE$ does not have valid formulas (cf.~Remark~\ref{rem:notautologies}) while $\mathit{LET}_F$ does: namely $\circ\phi\vee\bullet\phi$ is valid. Second, $\mathit{LET}_F$ presupposes that the accessibility relation on the frame is a partial order. In \S\ref{sec:framedefinability}, we will see that \emph{pre-ordered} ($\mathbf{S4}$) frames are definable in $\AFDE$ which will allow us to observe further differences between $\AFDE$ and $\mathit{LET}_F$ (cf.~Remark~\ref{rem:LETFcomparison2}).
\end{remark}
\begin{convention}
Let $\mathfrak{M}$ be a~model and let $w\in\mathfrak{M}$. We will use the following naming conventions.
\begin{center}
\begin{tabular}{r@{ --- }l}
$\mathfrak{M},w\vDash^+\phi$&$\phi$ \emph{is true at} $w$\\
$\mathfrak{M},w\vDash^-\phi$&$\phi$ \emph{is false at} $w$\\
$\mathfrak{M},w\nvDash^+\phi$&$\phi$ \emph{is not-true at} $w$\\
$\mathfrak{M},w\nvDash^-\phi$&$\phi$ \emph{is not-false at} $w$
\end{tabular}
\end{center}
In what follows, we will understand phrases such as ‘$\phi$ is true at $w$’ in Belnapian sense (i.e., as ‘$\phi$ is \emph{at least true} at $w$’), not in the classical sense (‘$\phi$ is \emph{true and not-false} at $w$’) unless specified otherwise.
\end{convention}
\begin{convention}[Notation in the models]
Throughout the paper, we are going to give examples of various models. In order to specify the values of variables in a~given state, we will use the following shorthands.
\begin{center}
\begin{tabular}{c|c}
\textbf{notation}&\textbf{meaning}\\\hline
$w:p^+$&$p$ is true and not-false at $w$\\
$w:p^-$&$p$ is false and not-true at $w$\\
$w:p^\pm$&$p$ is both true and false at $w$\\
$w:\xcancel{p}$&$p$ is neither true nor false at $w$
\end{tabular}
\end{center}
\end{convention}
\begin{remark}\label{rem:notautologies}
Note that just as in $\mathbf{FDE}$, there is no formula $\phi\in\Ltriangle$ s.t.
\begin{itemize}
\item for any pointed model $\langle\mathfrak{M},w\rangle$, $\mathfrak{M},w\vDash^+\phi$ or 
\item for any pointed model $\langle\mathfrak{M},w\rangle$, $\mathfrak{M},w\nvDash^-\phi$.
\end{itemize}
Indeed, consider the models in fig.~\ref{fig:Trivexample}.
\begin{figure}
\centering
\begin{tikzpicture}[modal,node distance=0.5cm,world/.append style={minimum
size=1cm}]
\node[world] (w) [label=below:{$w$}] {$p^\pm$};
\node[] [left=of w] {$\mathfrak{M}$:};
\path[->] (w) edge[reflexive] (w);
\end{tikzpicture}
\hfil
\begin{tikzpicture}[modal,node distance=0.5cm,world/.append style={minimum
size=1cm}]
\node[world] (w') [label=below:{$w'$}] {$\xcancel{p}$};
\node[] [left=of w'] {$\mathfrak{M}'$:};
\path[->] (w') edge[reflexive] (w');
\end{tikzpicture}
\caption{All variables have the same values exemplified by $p$.}
\label{fig:Trivexample}
\end{figure}
One can check that for any $\phi\in\Ltriangle$,
\begin{itemize}
\item $\mathfrak{M},w\vDash^+\phi$ and $\mathfrak{M},w\vDash^-\phi$;
\item $\mathfrak{M}',w'\nvDash^+\phi$ and $\mathfrak{M}',w'\nvDash^-\phi$. 
\end{itemize}
Thus, it makes sense to speak of valid \emph{sequents}, not formulas.
\end{remark}
\begin{convention}
For any state $w\in\mathfrak{M}$, we set $R(w)=\{w'\mid wRw'\}$.
\end{convention}
Let us now discuss the semantics for $\blacktriangle$ in more detail. Definition~\ref{def:AFDEsemantics} gives the following conditions on the Belnapian values of $\blacktriangle\phi$ in a~given state.
\begin{itemize}
\item[a.] $\blacktriangle\phi$ is \emph{true and not-false} at $w$ iff $\phi$ is either true and not-false in all accessible states or false and not-true in all accessible states.
\item[b.] $\blacktriangle\phi$ is \emph{both true and false} at $w$ iff $R(w)\neq\varnothing$ and $\phi$ is both true and false in all accessible states.
\item[c.] $\blacktriangle\phi$ is \emph{neither true nor false} at $w$ iff $R(w)\neq\varnothing$ and $\phi$ is neither true nor false in all accessible states.
\item[d.] $\blacktriangle\phi$ is \emph{false and not-true} at $w$ iff there are two accessible states such that $\phi$ has different truth values therein.
\end{itemize}

Recall first, that in classical logic $\triangle\phi$ (‘$\phi$ is non-contingent’) can be understood in two \emph{classically equivalent} ways:
\begin{itemize}
\item[(i.)] $\phi$ is true in all accessible states or false in all accessible states --- this interpretation comes from the reading of $\triangle\phi$ as being equivalent to $\Box\phi\vee\Box\neg\phi$;
\item[(ii.)] $\phi$ \emph{has the same truth value} in all accessible states.
\end{itemize}
In the case of $\mathbf{FDE}$, however, the second interpretation is stronger\footnote{In \S\ref{sec:expressivity}, we will see that a~straightforward expansion of $\mathbf{FDE}$ with $\Box$ cannot define $\blacktriangle$.} than the first. Indeed, if $\phi$ has the same truth value in all accessible states, then it is either true in all accessible states or false in all accessible states\footnote{It is possible that $\phi$ is neither true nor false in all accessible states but then it means that $\phi$ has \emph{no} value. Note that this argument can be formalised once the semantics for $\Box$ is given --- cf. \S\ref{sec:expressivity} for more details.}. On the contrary, it is possible for a~formula to be true in all accessible states and false in some just as in fig.~\ref{fig:trueandfalse}.
\begin{figure}
\begin{tikzpicture}[modal,node distance=1cm,world/.append style={minimum
size=1cm}]
\node[world] (w0) [label=below:{$w_0$}] {$p^+$};
\node[world] (w1) [right=of w0] [label=below:{$w_1$}] {$p^\pm$};
\node[] [left=of w0] {$\mathfrak{M}$:};
\path[->] (w0) edge[reflexive] (w0);
\path[->] (w0) edge (w1);
\end{tikzpicture}
\caption{$p$ is true and not-false at $w_0$ but is both true and false at $w_1$. Thus, $\blacktriangle p$ is false and not-true at $w_0$.}
\label{fig:trueandfalse}
\end{figure}
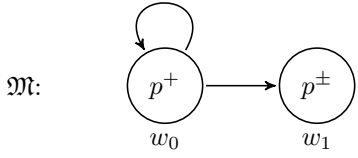
Thus, we cannot rightfully say that the value of $p$ is non-contigent in all states accessible from $w_0$. Moreover, in example~\ref{ex:witnesses}, we propose a~situation that explicitly shows that defining $\blacktriangle\phi$ as in (i.)\ might lead to an undesired conclusion. This means that only the interpretation in (ii.)\ corresponds to the intuition behind the non-contingency operator in the case of $\mathbf{FDE}$.

The d.\ case in the list above is well-aligned with the intuition of non-contingency. The choice of semantics for the cases a.--c., however, is less obvious and requires some explanation. To illustrate it better, we give contexts and examples that motivate the chosen semantics of $\blacktriangle\phi$ read as ‘the value of $\phi$ is the same in all accessible states’.
\paragraph{\textbf{Checking testimonies}}\label{par:witnesses}
Imagine that an investigator reads testimonies of several witnesses $\{w_1,\ldots,w_m\}$ that were asked to confirm or deny whether some events $\{p_1,\ldots,p_n\}$ happened or not.

In this setting, we can associate the testimonies with the states accessible to the investigator (whom we represent as $w$). Statements then take their values in these states in the expected fashion:
\begin{itemize}
\item if witness $w_i$ confirms that $p_j$ did happen and does not deny it, then $p_j$ is true and not-false at $w_i$;
\item if $w_i$ denies that $p_j$ happen, then $p_j$ is false and not-true at the corresponding state;
\item if $w_i$ gives a~contradictory account (which witnesses sometimes do) regarding $p_j$, e.g., first denies $p_j$ then confirms it, then $p_j$ is both true and false at $w_i$;
\item finally, if $w_i$ neither confirms nor denies $p_j$, $p_j$ is neither true nor false at~$w_i$.
\end{itemize}

The investigator does not know whether these events occurred and whether the testimonies are truthful. What they can, however, do is check for the ‘anomalies’ or irregularities in the testimonies. They are not only accounts of different claims regarding one statement (e.g., when one witness says that $p$ did happen but the other says that it did not), but also contradictory accounts of a~single witness (when they get confused in their testimony) or situations when one witness gives testimony regarding $p$ but the other does not. If no irregularities are detected, the testimonies may be considered trustworthy or at least pertaining to the same set of events.

This ‘anomaly-checking’ can be modelled with the $\blacktriangle$ operator in the following fashion. We represent the investigator with $w$ and set $wRw_i$ for all $w_i$'s representing the witnesses' accounts. Then, we have the following four options regarding the value of $\blacktriangle\phi$ (‘the accounts regarding $\phi$ do not contain anomalies’):
\begin{itemize}
\item[I.] if $\blacktriangle\phi$ is \emph{true} at $w$, it means that all witnesses give the same account regarding $\phi$;
\item[II.] if $\blacktriangle\phi$ is \emph{not-false} at $w$, then there are no two different accounts regarding $\phi$, and there are no contradictory accounts either\footnote{I.e., no one got confused in their testimony. Note that one can, likewise, view a~contradictory account of a~witness regarding an event as \emph{two different accounts}.};
\item[III.] if $\blacktriangle\phi$ is \emph{false} at $w$, it means that the accounts of at least two witnesses regarding $\phi$ are different or that somebody contradicts themselves;
\item[IV.] if $\blacktriangle\phi$ is \emph{not-true} at $w$, then either there are two different accounts regarding $\phi$ or there are no accounts at all.
\end{itemize}
Observe, that II.\ describes the situation when there are no irregularities in the above-given sense. Note as well that I.\ differs from II.\ because the latter option allows for a~situation when nobody gave any account regarding $\phi$ while I.\ requires that these accounts \emph{must} be given.

It is important to mention that an investigator might not be inclined to consider $\blacktriangle\phi$ \emph{true} (as opposed to \emph{not-false}) when no witness gives any account regarding $\phi$ for two reasons. First, the investigator themselves does not have any information regarding $\phi$ at all. Second, it usually goes against intuition to claim that ‘everybody gives the same account on $\phi$’ when in fact no account is given.

Likewise, if all witnesses say that $\phi$ is both true and false, it is reasonable to say that ‘the accounts on $\phi$ \emph{do not contain} anomalies’ is actually \emph{false}. On the other hand, it is the case that all witnesses \emph{agree} in their accounts on $\phi$. Thus, we can state that $\blacktriangle\phi$ is \emph{true and false}.

Finally, we wish to stress an important difference between the following two situations: (i) when no witness gives an account on $\phi$ and (ii) when some witnesses agree about $\phi$ (say, confirm and do not deny it), but others do not provide any testimony. As we said above, in (i), there are no irregularities but there are no accounts to compare, whence $\blacktriangle\phi$ is not-true and not-false. However, in (ii), there is an evident irregularity: some witnesses testify and others do not. This renders $\blacktriangle\phi$ false and not-true.

To further illustrate the reading given above, we propose the following example with an investigator.
\begin{example}\label{ex:witnesses}
Assume that our investigator is searching for a~suspect who is short ($s$) and armed with a~pistol~($p$). Moreover, the investigator needs the evidence to be supported by all witnesses. Two witnesses testified to a police officer regarding the same suspicious person spotted by them near a~bank at 6~am, the~12th of October this year.
\begin{itemize}
\item The account of $w_1$ mentions that the suspicious person was short and armed with a~pistol.
\item The account of $w_2$ was unfortunately badly written. Not only did the witness contradict themselves by first stating that the person they saw was short but then saying that that very same person was ‘tall as a~basketball player’ (i.e., not short), but it seems that the police officer forgot to ask the witness whether the person was armed.
\end{itemize}
The situation can be represented with the model in fig.~\ref{fig:suspects}. Here, the investigator does not have any information regarding the suspicious person in question, whence $p$ and $s$ are neither true nor false at $w$.
\begin{figure}
\centering
\begin{tikzpicture}[modal,node distance=1.5cm,world/.append style={minimum
size=1cm}]
\node[world] (w) [label=above:{$w$}] {$\xcancel{p}$, $\xcancel{s}$};
\node[world] (w1) [left=of w] [label=below:{$w_1$}] {$p^+$, $s^+$};
\node[world] (w2) [right=of w] [label=below:{$w_2$}] {$s^\pm$, $\xcancel{p}$};
\path[->] (w) edge (w1);
\path[->] (w) edge (w2);
\end{tikzpicture}
\caption{$w$ is the investigator; $w_1$ and $w_2$ stand for the accounts of the witnesses.}
\label{fig:suspects}
\end{figure}
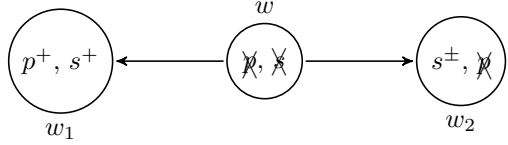

Observe that while $s$ is \emph{true} in all accessible states, and $p$ is \emph{not-false} in all of them, the accounts do contain anomalies w.r.t.\ both of them. Thus, $\blacktriangle p$ and $\blacktriangle s$ are \emph{false and not-true} at $w$. I.e., there are anomalies regarding $p$ and $s$ and the $w_1$ and $w_2$ give different testimonies on them. Indeed, it is easy to see with~$s$: while $s$ \emph{is true} at both $w_1$ and $w_2$, it is false only at $w_2$ which is an explicit anomaly: $w_2$'s account is self-contradictory and is different from that of $w_1$. Moreover, even though, there is no \emph{explicit} contradiction between $w_1$ and $w_2$ regarding $p$, their accounts are different: $w_1$ \emph{confirms} $p$ while $w_2$ \emph{does not}.
\end{example}

\paragraph{\textbf{A network of Belnapian computers}}
The next example is inspired by the ‘Belnapian computer’ from~\cite{Belnap1977computer}. Recall that in such a~computer (or database), each statement can be not only true or false but both true and false (e.g., if there was a~mistake in the input) and neither true nor false (if the input is incomplete) as well. Here, we propose to look at several databases connected to a~network which allows access from some databases to others.

This network is being audited by an external investigator who has access to all computers: i.e., if some accessible database refers to another one, then that one is accessible too\footnote{In other words, the accessibility relation can in some cases be transitive. It is also reasonable to assume that databases connected to a~network can refer not only to other databases but to themselves as well (thus, the relation is reflexive in this case). We will see that reflexive transitive frames, as well as reflexive frames, are definable in~\S\ref{sec:framedefinability}.\label{footnote:S4}}. Just as in the previous case with witness accounts, the investigator looks for inconsistencies in and between the databases. These can indicate that the records were falsified, or that the books are just badly kept.
\begin{example}\label{ex:Belnapnetwork}
An auditor examines a~database in the central office of a~stationery company which lists whether the goods are still in stock. The database tells that there are still pencils ($p$) and rulers ($r$) left at the store, and so does the database at the warehouse. But the database in the store says that the pencils are out of stock and does not contain any mention of rulers at all! Fortunately, the auditor was granted remote access to all databases to which the central one refers, and thus they can spot the irregularities in the bookkeeping.

The situation can be represented with fig.~\ref{fig:pencilsexample}. Evidently, $\blacktriangle p$ and $\blacktriangle r$ are \emph{false and not-true} at $w_c$.
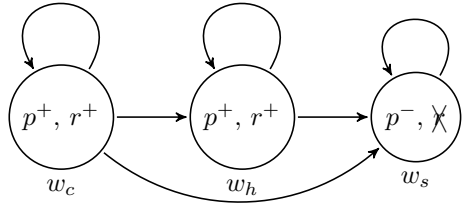
\begin{figure}
\centering
\begin{tikzpicture}[modal,node distance=1cm,world/.append style={minimum
size=1cm}]
\node[world] (wc) [label=below:{$w_c$}] {$p^+$, $r^+$};
\node[world] (wh) [right=of wc] [label=below:{$w_h$}] {$p^+$, $r^+$};
\node[world] (ws) [right=of wh] [label=below:{$w_s$}] {$p^-$, $\xcancel{r}$};
\path[->] (wc) edge[reflexive] (wc);
\path[->] (wh) edge[reflexive] (wh);
\path[->] (ws) edge[reflexive] (ws);
\path[->] (wc) edge (wh);
\path[->] (wh) edge (ws);
\path[->] (wc) edge[bend right=40] (ws);
\end{tikzpicture}
\caption{$w_c$ is the computer at the central office that the auditor is looking into; $w_h$ is the database at the warehouse, and $w_s$ is the database in the store.}
\label{fig:pencilsexample}
\end{figure}
Note that had the auditor not received access to $w_s$ (i.e., if $w_s$ had not been accessible from $w_c$), $\blacktriangle p$ and $\blacktriangle r$ would have been \emph{true and not-false} at $w_c$.
\end{example}
\begin{remark}
Another way of analysing networks of Belnapian computers is presented in the eponymous paper~\cite{ShramkoWansing2005}. There, the authors devise 16-valued logics that model the reasoning of a~central computer that collects information from the network. Our approach is different in that we take into account that the configuration of the network (represented via a~Kripke model) might be different from one case to another.

A related approach~\cite{Blasio2017} proposes logic $\mathbf{E}^B$ based upon $\mathbf{FDE}$ to analyse epistemic attitudes and formalise reasoning with acceptance and rejection. The paper also provides a sound and complete four-sided sequent calculus for $\mathbf{E}^B$.
\end{remark}
\paragraph{\textbf{$\blacktriangle\phi$ as ‘the value of $\phi$ is the same in all accessible states’}}
Finally, we argue that since the support of truth is thought to be independent of the support of falsity in $\mathbf{FDE}$, it is reasonable to demand that $\blacktriangle p$ is \emph{both true and false} when $p$ is both true and false in all accessible states even when we interpret it as ‘the value of $p$ is the same in all accessible states’. Likewise, we argue that $\blacktriangle p$ is \emph{neither true nor false} when $p$ is neither true nor false in all accessible states  (cf.\ fig.~\ref{fig:BandNfortriangle} for examples of models).
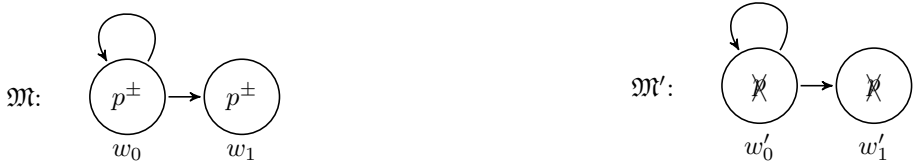
\begin{figure}
\begin{tikzpicture}[modal,node distance=0.5cm,world/.append style={minimum
size=1cm}]
\node[world] (w0) [label=below:{$w_0$}] {$p^\pm$};
\node[world] (w1) [right=of w0] [label=below:{$w_1$}] {$p^\pm$};
\node[] [left=of w0] {$\mathfrak{M}$:};
\path[->] (w0) edge[reflexive] (w0);
\path[->] (w0) edge (w1);
\end{tikzpicture}
\hfil
\begin{tikzpicture}[modal,node distance=0.5cm,world/.append style={minimum
size=1cm}]
\node[world] (w0) [label=below:{$w'_0$}] {$\xcancel{p}$};
\node[world] (w1) [right=of w0] [label=below:{$w'_1$}] {$\xcancel{p}$};
\node[] [left=of w0] {$\mathfrak{M}'$:};
\path[->] (w0) edge[reflexive] (w0);
\path[->] (w0) edge (w1);
\end{tikzpicture}
\caption{Here, $\blacktriangle p$ is \emph{both true and false} at $w_0$ and \emph{neither true nor false} at $w'_0$.}
\label{fig:BandNfortriangle}
\end{figure}

$\blacktriangle p$ is surely \emph{true} at $w_0$ since $p$ has the same value in all accessible states. But $\blacktriangle p$ is \emph{false} as well: $p$ is true at $w_0$ and false at $w_1$. So, we find ourselves in a~paradoxical situation: each source gives the same truth value to $\phi$ but since each source, in fact, gives \emph{two different truth values --- true and false ---} to $\phi$, the information the sources give is contradictory.

Likewise, $\blacktriangle p$ is \emph{not false} at $w'_0$: there are no values of $p$ to compare. But it is because of this that we may not be willing to say that $\blacktriangle p$ is \emph{true} at $w'_0$. For it is counterintuitive to claim that ‘$p$ has the same truth value in all accessible states’ if $p$ does not have any truth value at all in any of them. Indeed, this statement is \emph{vacuously} true from the classical point of view. But vacuously true statements (such as ‘all unicorns are green’ --- but there are no unicorns to speak of) do not correspond well to our intuition, and people do not tend to agree with them.
\subsection{Validity}
In~\S\ref{subsec:interpretationofconnectives}, we defined validity via truth preservation. However, the contexts given in examples~\ref{ex:witnesses} and~\ref{ex:Belnapnetwork} are closer to the definition of validity as non-falsity preservation. There, we were looking mostly for \emph{inconsistencies} while incomplete records were not a~huge problem as long as they were incomplete everywhere. On the other hand, an agent may be looking for some information that determines the truth value of a~statement and may tolerate that this information is contradictory if all sources agree on it. This approach suggests the definition of validity as truth preservation.

In the remainder of this section, we will show that these two definitions of validity are equivalent.
\begin{definition}[Dual models]\label{dualvaluationsdefinition}
For any model $\mathfrak{M}=\langle W,R,v^+,v^-\rangle$, we define its \emph{dual model} on the same frame $\mathfrak{M}_d=\langle W,R,v^+_d,v^-_d\rangle$ as follows.
\begin{align*}
\text{if }w\in v^+(p),w\notin v^-(p)&\text{ then }w\in v^+_d(p),w\notin v^-_d(p)\\
\text{if }w\in v^+(p),w\in v^-(p)&\text{ then }w\notin v^+_d(p),w\notin v^-_d(p)\\
\text{if }w\notin v^+(p),w\notin v^-(p)&\text{ then }w\in v^+_d(p),w\in v^-_d(p)\\
\text{if }w\notin v^+(p),w\in v^-(p)&\text{ then }w\notin v^+_d(p),w\in v^-_d(p)
\end{align*}
\end{definition}
In other words, if a~variable was either true and not-false or false and not-true in some state in a~model, then it remains such in the dual model. But if it was both true and false, it becomes neither true nor false and vice versa.
\begin{lemma}\label{lemma:dualvaluations}
Let $\mathfrak{M}=\langle W,R,v^+,v^-\rangle$ be a~model and $\mathfrak{M}_d=\langle W,R,v^+_d,v^-_d\rangle$ be its dual model. Then for any $\phi\in\Ltriangle$ and $w\in\mathfrak{M}$, it holds that
\begin{align*}
\text{if }\mathfrak{M},w\vDash^+\phi\text{ and }\mathfrak{M},w\nvDash^-\phi&\text{ then }\mathfrak{M}_d,w\vDash^+\phi\text{ and }\mathfrak{M}_d,w\nvDash^-\phi\\
\text{if }\mathfrak{M},w\vDash^+\phi\text{ and }\mathfrak{M},w\vDash^-\phi&\text{ then }\mathfrak{M}_d,w\nvDash^+\phi\text{ and }\mathfrak{M}_d,w\nvDash^-\phi\\
\text{if }\mathfrak{M},w\nvDash^+\phi\text{ and }\mathfrak{M},w\nvDash^-\phi&\text{ then }\mathfrak{M}_d,w\vDash^+\phi\text{ and }\mathfrak{M}_d,w\vDash^-\phi\\
\text{if }\mathfrak{M},w\nvDash^+\phi\text{ and }\mathfrak{M},w\vDash^-\phi&\text{ then }\mathfrak{M}_d,w\nvDash^+\phi\text{ and }\mathfrak{M}_d,w\vDash^-\phi
\end{align*}
\end{lemma}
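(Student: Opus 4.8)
The plan is to prove the four implications simultaneously by induction on the structure of $\phi$. The statement is essentially a~"truth-lemma"-style bookkeeping result: it says that the dual construction at the level of variables propagates to all formulas, where the four lines encode the four Belnapian values $\mathbf{T}$, $\mathbf{B}$, $\mathbf{N}$, $\mathbf{F}$ and how the dualisation swaps $\mathbf{B}\leftrightarrow\mathbf{N}$ while fixing $\mathbf{T}$ and $\mathbf{F}$. Observe first that the four antecedents are mutually exclusive and jointly exhaustive (every pair $(\mathfrak{M},w\vDash^+\phi\text{ or not},\ \mathfrak{M},w\vDash^-\phi\text{ or not})$ falls under exactly one), so it is harmless and convenient to phrase the inductive hypothesis as: the "value" of $\phi$ at $w$ in $\mathfrak{M}$ (as an element of $\{\mathbf{T},\mathbf{B},\mathbf{N},\mathbf{F}\}$) determines the value of $\phi$ at $w$ in $\mathfrak{M}_d$ via the permutation $\sigma$ fixing $\mathbf{T},\mathbf{F}$ and swapping $\mathbf{B},\mathbf{N}$.

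First I would dispatch the base case $\phi=p$: this is literally Definition~\ref{dualvaluationsdefinition}, read as a~statement about values rather than about membership in $v^\pm$. Next, the connective cases $\neg$, $\wedge$, $\vee$: for $\neg$, note that negation acts on the four values by swapping $\mathbf{T}\leftrightarrow\mathbf{F}$ and fixing $\mathbf{B},\mathbf{N}$, and this commutes with $\sigma$, so the inductive step is immediate. For $\wedge$ and $\vee$ one checks that the induced binary operations on $\{\mathbf{T},\mathbf{B},\mathbf{N},\mathbf{F}\}$ commute with $\sigma$ applied coordinatewise; this is a~finite verification (sixteen entries each, or less if one exploits the symmetry between $\wedge$ and $\vee$ under $\neg$), and it is exactly the well-known fact that $\sigma$ is an automorphism of the $\mathbf{FDE}$ truth tables — it is the "$- \mathbf{B}\leftrightarrow\mathbf{N}$" symmetry. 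I would present this compactly, perhaps via a~small table, rather than grinding every subcase.

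The only genuinely new work is the modal case $\phi=\blacktriangle\psi$. Here I would argue directly from the clauses a.--d.\ listed after Definition~\ref{def:AFDEsemantics} (which repackage conditions \eqref{t1conditionI}--\eqref{fconditionS}), using that $\mathfrak{M}$ and $\mathfrak{M}_d$ share the same frame $\langle W,R\rangle$, hence the same $R(w_0)$. By the inductive hypothesis, for each successor $w'\in R(w_0)$ the value of $\psi$ at $w'$ is transformed by $\sigma$ when passing to $\mathfrak{M}_d$. Now I split on the value of $\blacktriangle\psi$ at $w_0$ in $\mathfrak{M}$ using the four clauses. If $\blacktriangle\psi$ is $\mathbf{T}$ at $w_0$ (clause a.), then $\psi$ is $\mathbf{T}$ at every successor, or $\mathbf{F}$ at every successor; since $\sigma$ fixes both $\mathbf{T}$ and $\mathbf{F}$, the same holds in $\mathfrak{M}_d$, so $\blacktriangle\psi$ is $\mathbf{T}$ at $w_0$ in $\mathfrak{M}_d$. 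If $\blacktriangle\psi$ is $\mathbf{B}$ at $w_0$ (clause b.), then $R(w_0)\neq\varnothing$ and $\psi$ is $\mathbf{B}$ at every successor, so in $\mathfrak{M}_d$ (still $R(w_0)\neq\varnothing$) $\psi$ is $\mathbf{N}$ at every successor, which by clause c.\ makes $\blacktriangle\psi$ equal to $\mathbf{N}$ at $w_0$ in $\mathfrak{M}_d$ — matching $\sigma(\mathbf{B})=\mathbf{N}$. The $\mathbf{N}$ case (clause c.) is symmetric. If $\blacktriangle\psi$ is $\mathbf{F}$ at $w_0$ (clause d.), there are $w_1,w_2\in R(w_0)$ on which $\psi$ takes different values; applying $\sigma$ pointwise keeps them different (as $\sigma$ is injective), so clause d.\ again yields $\blacktriangle\psi$ equal to $\mathbf{F}$ at $w_0$ in $\mathfrak{M}_d$.

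I expect the main — though still routine — obstacle to be the bookkeeping in the $\blacktriangle$-case: one must be careful that clause d.\ ("$\psi$ has different truth values at two successors") is stated in terms of the full four-valued value and not merely the support of truth, so that "different" is genuinely preserved by the injection $\sigma$; and one must not forget the nonemptiness side-condition $R(w_0)\neq\varnothing$ that distinguishes clauses b.\ and c.\ from the vacuous sub-case of clause a. Everything else is the standard observation that $\sigma$ is an $\mathbf{FDE}$-automorphism, extended through the modality by the fact that dualisation leaves the frame untouched. A~closing remark I would add: since $\sigma$ is an involution, $(\mathfrak{M}_d)_d=\mathfrak{M}$, so the four implications are in fact equivalences, which is what makes Lemma~\ref{lemma:dualvaluations} usable to transfer truth-preservation validity to non-falsity-preservation validity in Theorem~\ref{theorem:AFDEcontraposition}.
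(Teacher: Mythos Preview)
Your proposal is correct and follows essentially the same inductive strategy as the paper's proof: the base case and propositional connectives are dispatched by the $\mathbf{B}\leftrightarrow\mathbf{N}$ symmetry of $\mathbf{FDE}$ (the paper phrases this as ``admissibility of contraposition'' with a citation), and the $\blacktriangle$-case is handled by a case split on the Belnapian value of $\blacktriangle\psi$. The only stylistic difference is that you work with the repackaged clauses a.--d.\ and the permutation $\sigma$, which lets you treat the $\mathbf{F}$ case in one line via injectivity, whereas the paper unfolds that case into the explicit sub-possibilities (a.1)--(c) in terms of $\vDash^+$ and $\vDash^-$; both amount to the same verification.
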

\begin{proof}
We adapt the technique from~\cite{ZaitsevShramko2004english} and prove the statement by induction on $\phi$. The basis case of propositional variables holds by the construction of $v^+_d$ and $v^-_d$. The cases of propositional connectives hold by virtue of the admissibility of the contraposition in $\mathbf{FDE}$~\cite{Font1997,Dunn2000,ZaitsevShramko2004english}. It remains to consider the case of $\blacktriangle$.

Let $\phi=\blacktriangle\phi'$. If $\mathfrak{M},w\vDash^+\blacktriangle\phi'$ and $\mathfrak{M},w\nvDash^-\blacktriangle\phi'$, then either (1) $\mathfrak{M},w'\vDash^+\phi'$ and $\mathfrak{M},w'\nvDash^-\phi'$ for any accessible $w'$, or (2) $\mathfrak{M},w'\nvDash^+\phi'$ and $\mathfrak{M},w'\vDash^-\phi$ for any accessible $w'$. In the first case, by the induction hypothesis, we obtain that $\mathfrak{M}_d,w'\vDash^+\phi'$ and $\mathfrak{M}_d,w'\nvDash^-\phi'$ for any accessible $w'$. In the second case, we have $\mathfrak{M}_d,w'\nvDash^+\phi'$ and $\mathfrak{M}_d,w'\vDash^-\phi'$ for any accessible $w'$. In both cases, $\mathfrak{M}_d,w\vDash^+\blacktriangle\phi'$ and $\mathfrak{M}_d,w\nvDash^-\blacktriangle\phi'$.

Now let $\mathfrak{M},w\vDash^+\blacktriangle\phi'$ and $\mathfrak{M},w\vDash^-\blacktriangle\phi'$. Then $R(w)\neq\varnothing$ and $\mathfrak{M},w'\vDash^+\phi'$ and $\mathfrak{M},w'\vDash^-\phi'$ in all $w'\in R(w)$. By the induction hypothesis, $\mathfrak{M}_d,w'\nvDash^+\phi'$ and $\mathfrak{M}_d,w'\nvDash^-\phi'$ in any accessible $w'$ and thus $\mathfrak{M}_d,w\nvDash^+\blacktriangle\phi'$ and $\mathfrak{M}_d,w\nvDash^-\blacktriangle\phi'$.

The case of $\mathfrak{M},w\nvDash^+\blacktriangle\phi'$ and $\mathfrak{M},w\nvDash^-\blacktriangle\phi'$ can be proved in the same manner.

Lastly, if $\mathfrak{M},w\nvDash^+\blacktriangle\phi'$ and $\mathfrak{M},w\vDash^-\blacktriangle\phi'$, then there are two accessible states $w'$ and $w''$ s.t.\ one of the following options --- (a), (b), or (c) --- is the case.
\begin{enumerate}
\item[(a)] $\mathfrak{M},w'\vDash^+\phi'$ and $\mathfrak{M},w'\nvDash^-\phi'$ and
\begin{enumerate}
\item[(a.1)] $\mathfrak{M},w''\vDash^+\phi'$ and $\mathfrak{M},w''\vDash^-\phi'$, or
\item[(a.2)] $\mathfrak{M},w''\nvDash^+\phi'$ and $\mathfrak{M},w''\nvDash^-\phi'$, or
\item[(a.3)] $\mathfrak{M},w''\nvDash^+\phi'$ and $\mathfrak{M},w''\vDash^-\phi'$.
\end{enumerate}
\item[(b)] $\mathfrak{M},w'\nvDash^+\phi'$ and $\mathfrak{M},w'\vDash^-\phi'$ and
\begin{enumerate}
\item[(b.1)] $\mathfrak{M},w''\vDash^+\phi'$ and $\mathfrak{M},w''\vDash^-\phi'$ or
\item[(b.2)] $\mathfrak{M},w''\nvDash^+\phi'$ and $\mathfrak{M},w''\nvDash^-\phi'$.
\end{enumerate}
\item[(c)] $\mathfrak{M},w'\vDash^+\phi'$ and $\mathfrak{M},w'\vDash^-\phi'$ and $\mathfrak{M},w''\nvDash^+\phi'$ and $\mathfrak{M},w''\nvDash^-\phi'$.
\end{enumerate}
By the induction hypothesis, these are transformed as follows.
\begin{enumerate}
\item[(a)] $\mathfrak{M}_d,w'\vDash^+\phi'$ and $\mathfrak{M}_d,w'\nvDash^-\phi'$ and
\begin{enumerate}
\item[(a.1)] $\mathfrak{M}_d,w''\nvDash^+\phi'$ and $\mathfrak{M}_d,w''\nvDash^-\phi'$, or
\item[(a.2)] $\mathfrak{M}_d,w''\vDash^+\phi'$ and $\mathfrak{M}_d,w''\vDash^-\phi'$, or
\item[(a.3)] $\mathfrak{M}_d,w''\nvDash^+\phi'$ and $\mathfrak{M}_d,w''\vDash^-\phi'$.
\end{enumerate}
\item[(b)] $\mathfrak{M}_d,w'\nvDash^+\phi'$ and $\mathfrak{M}_d,w'\vDash^-\phi'$ and
\begin{enumerate}
\item[(b.1)] $\mathfrak{M}_d,w''\nvDash^+\phi'$ and $\mathfrak{M}_d,w''\nvDash^-\phi'$ or
\item[(b.2)] $\mathfrak{M}_d,w''\vDash^+\phi'$ and $\mathfrak{M}_d,w''\vDash^-\phi'$.
\end{enumerate}
\item[(c)] $\mathfrak{M}_d,w'\nvDash^+\phi'$ and $\mathfrak{M}_d,w'\nvDash^-\phi'$ and $\mathfrak{M}_d,w''\vDash^+\phi'$ and $\mathfrak{M}_d,w''\vDash^-\phi'$.
\end{enumerate}
Clearly, in all three cases, $\mathfrak{M}_d,w\nvDash^+\blacktriangle\phi$ and $\mathfrak{M}_d,w\vDash^-\blacktriangle\phi$, as required.
\end{proof}
\begin{theorem}\label{theorem:AFDEcontraposition}
$\phi\vdash\chi$ is valid on $\mathfrak{F}$ iff for any model $\mathfrak{M}$ on $\mathfrak{F}$ and for any $w\in\mathfrak{M}$, $\mathfrak{M},w\nvDash^-\phi$ implies $\mathfrak{M},w\nvDash^-\chi$.

In addition, the contraposition holds for $\AFDE$. That is, if $\phi\vdash\chi$ is valid, then $\neg\chi\vdash\neg\phi$ is valid.
\end{theorem}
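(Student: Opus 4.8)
The plan is to reduce both claims to the dual-model construction of Definition~\ref{dualvaluationsdefinition} together with Lemma~\ref{lemma:dualvaluations}. Two preliminary observations do most of the work. First, dualisation is an involution, $(\mathfrak{M}_d)_d=\mathfrak{M}$: this is read off directly from the four clauses of Definition~\ref{dualvaluationsdefinition}, since ``true only'' and ``false only'' states are left untouched while ``both'' and ``neither'' are interchanged. Second, because the four antecedents in Lemma~\ref{lemma:dualvaluations} are jointly exhaustive and pairwise incompatible --- and the same holds of the four consequents --- each of the four implications in that lemma is in fact an equivalence. Collecting the cases in which $\psi$ is not-false gives the clean statement
\[\mathfrak{M},w\nvDash^-\psi\quad\Longleftrightarrow\quad\mathfrak{M}_d,w\vDash^+\psi\qquad(\star)\]
for every $\psi\in\Ltriangle$ and every $w$: on the left $\psi$ is ``true only'' or ``neither'' at $w$, and under dualisation these become exactly the ``true only'' and ``both'' states, i.e.\ precisely those where $\mathfrak{M}_d,w\vDash^+\psi$.

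For the first part of the theorem, fix a frame $\mathfrak{F}$ and argue on it. If $\phi\vdash\chi$ is valid on $\mathfrak{F}$ and $\mathfrak{M}$ is a model on $\mathfrak{F}$ with $\mathfrak{M},w\nvDash^-\phi$, then $(\star)$ gives $\mathfrak{M}_d,w\vDash^+\phi$; since $\mathfrak{M}_d$ is again a model on $\mathfrak{F}$, validity yields $\mathfrak{M}_d,w\vDash^+\chi$, and applying $(\star)$ to $\mathfrak{M}_d$ (whose dual is $\mathfrak{M}$ by the involution observation) returns $\mathfrak{M},w\nvDash^-\chi$. The converse direction is the mirror image: given non-falsity preservation on $\mathfrak{F}$ and a model $\mathfrak{M}$ with $\mathfrak{M},w\vDash^+\phi$, rewrite this via $(\star)$ as $\mathfrak{M}_d,w\nvDash^-\phi$, apply the hypothesis to the model $\mathfrak{M}_d$ on $\mathfrak{F}$, and translate back with $(\star)$. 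Here one uses that $\mathfrak{M}\mapsto\mathfrak{M}_d$ maps the models over $\mathfrak{F}$ bijectively onto themselves, which is exactly the involution.

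The contraposition claim then follows with no further work about $\blacktriangle$. Unfolding the falsity clause of negation, $\mathfrak{M},w\vDash^+\neg\chi$ iff $\mathfrak{M},w\vDash^-\chi$, and likewise for $\phi$; hence $\neg\chi\vdash\neg\phi$ is valid exactly when $\mathfrak{M},w\vDash^-\chi$ implies $\mathfrak{M},w\vDash^-\phi$ for every pointed model, equivalently --- contraposing within each pointed model --- when $\mathfrak{M},w\nvDash^-\phi$ implies $\mathfrak{M},w\nvDash^-\chi$ for every pointed model. By the first part of the theorem (applied on every frame) this is precisely the validity of $\phi\vdash\chi$, which is assumed.

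I do not anticipate a genuine obstacle, since Lemma~\ref{lemma:dualvaluations} already carries the whole argument. The only points that need care are the two bookkeeping steps --- promoting the lemma's four implications to the equivalence $(\star)$ via exhaustiveness and mutual exclusivity, and recording that dualisation is an involution --- together with keeping track of which model $(\star)$ is being applied to; all of this is entirely routine.
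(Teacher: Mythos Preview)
Your proposal is correct and rests on the same idea as the paper's proof: both arguments reduce to the dual-model construction and Lemma~\ref{lemma:dualvaluations}. The difference is purely in packaging. The paper argues by contraposition and splits into four cases A--D according to the Belnapian value of $\phi$ and $\chi$ at the witnessing point, passing to the dual model in cases B--D; you instead condense the four implications of Lemma~\ref{lemma:dualvaluations} into the single equivalence $(\star)$ and run a direct two-line transfer between truth preservation on $\mathfrak{M}_d$ and non-falsity preservation on $\mathfrak{M}$. Your route is slightly cleaner and makes the use of the involution $(\mathfrak{M}_d)_d=\mathfrak{M}$ explicit, whereas the paper's case analysis keeps closer to the raw statement of the lemma; but the mathematical content is the same.
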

\begin{proof}
Assume, there is a~model $\mathfrak{M}$ and $w\in\mathfrak{M}$ s.t.\ $\mathfrak{M},w\vDash^+\phi$ and $\mathfrak{M},w\nvDash^+\chi$, i.e. $\phi\vdash\chi$ is not valid.

We show that there exists a~model $\mathfrak{M}'$ and $w'\in\mathfrak{M}'$ s.t.\ $\mathfrak{M}',w'\nvDash^-\phi$ and $\mathfrak{M}',w'\vDash^-\chi$.

We have the following cases.
\begin{enumerate}
\item[A.] $\mathfrak{M},w\vDash^+\phi$ and $\mathfrak{M},w\nvDash^-\phi$ but $\mathfrak{M},w\nvDash^+\chi$ and $\mathfrak{M},w\vDash^-\chi$.
\item[B.] $\mathfrak{M},w\vDash^+\phi$ and $\mathfrak{M},w\nvDash^-\phi$ but $\mathfrak{M},w\nvDash^+\chi$ and $\mathfrak{M},w\nvDash^-\chi$
\item[C.] $\mathfrak{M},w\vDash^+\phi$ and $\mathfrak{M},w\vDash^-\phi$ but $\mathfrak{M},w\nvDash^+\chi$ and $\mathfrak{M},w\vDash^-\chi$.
\item[D.] $\mathfrak{M},w\vDash^+\phi$ and $\mathfrak{M},w\vDash^-\phi$ but $\mathfrak{M},w\nvDash^+\chi$ and $\mathfrak{M},w\nvDash^-\chi$.
\end{enumerate}
For A., the result follows immediately.

For B., C., and D., we use Lemma~\ref{lemma:dualvaluations} to build dual models where the following statements hold.
\begin{enumerate}
\item[B.] $\mathfrak{M}_d,w\vDash^+\phi$ and $\mathfrak{M}_d,w\nvDash^-\phi$ but $\mathfrak{M}_d,w\vDash^+\chi$ and $\mathfrak{M}_d,w\vDash^-\chi$
\item[C.] $\mathfrak{M}_d,w\nvDash^+\phi$ and $\mathfrak{M}_d,w\nvDash^-\phi$ but $\mathfrak{M}_d,w\nvDash^+\chi$ and $\mathfrak{M}_d,w\vDash^-\chi$.
\item[D.] $\mathfrak{M}_d,w\nvDash^+\phi$ and $\mathfrak{M}_d,w\nvDash^-\phi$ but $\mathfrak{M}_d,w\vDash^+\chi$ and $\mathfrak{M}_d,w\vDash^-\chi$.
\end{enumerate}

The converse direction can be shown in the same manner.

Assume now that $\phi\vdash\chi$ is valid but $\neg\chi\vdash\neg\phi$ is not. Then, there exist a~model $\mathfrak{M}$ and $w\in\mathfrak{M}$ s.t.\ $\mathfrak{M},w\vDash^+\neg\chi$ but $\mathfrak{M},w\nvDash^+\neg\phi$. Hence, $\mathfrak{M},w\vDash^-\chi$ and $\mathfrak{M},w\nvDash^-\phi$. Thus, $\phi\vdash\chi$ is not valid by the above-proven statement. A~contradiction.
\end{proof}
\section{Proof system\label{sec:proofsystem}}
In this section, we are presenting the proof system for our logic. We borrow the basic idea from the D'Agostino's $\mathbf{RE}_{\mathrm{fde}}$~\cite{DAgostino1990}. Namely, we define a~so-called analytic cut system --- a~modification of analytical tableaux that uses the ‘analytic cut’ rule which for the case of classical logic looks as follows:
\[\dfrac{}{\phi\mid\neg\phi}\]
for any formula $\phi$ being a~subformula of some formula on the branch.

We choose analytic cut systems for several reasons. First, they significantly reduce branching of the rules~(cf.~\cite{DAgostino1990,DAgostino1992,DAgostinoMondadori1994} for the classical logic and~\cite{CaleiroCarnielliConiglioMarcos2005,CaleiroMarcosVolpe2015,CaleiroMarcelinoRivieccio2018} for the non-classical ones) and simplify the structure of the derivations. Indeed, in our case, the semantics of $\blacktriangle$ would lead to tableaux rules with a~very complicated structure. Second, there are analytic cut systems for classical normal logics using the ‘necessity’ modality $\Box$ (cf., e.g.~\cite{Amerbauer1996}, \cite{Nguyen2001}, and~\cite{Indrzejczak2012}) as well as to non-classical logics~\cite{DAgostinoGabbay1994}. However, to the best of our knowledge, there are no analytic cut systems for non-classical logics with non-standard modalities.

Third, in contrast to natural deduction, sequent calculi, or Hilbert-style systems, it is usually straightforward to show the soundness and completeness of the analytic cut calculi. This is even more important since the completeness proofs for the modal extensions of $\mathbf{FDE}$ are prone to errors (cf.~\cite{Drobyshevich2020} for more details).
\subsection{Analytic cut}
We are going to use labelled formulas for our calculi. Since we have frame semantics, the label will consist of two parts: the generalised truth value assignment of the formula and the state where the formula has that truth value.
\begin{definition}\label{def:labelledformulas}
We fix a~countable set of state-labels $\mathsf{Lab}=\{w,w_0,w',\ldots\}$ and the~set of value-labels $\mathsf{Val}=\{\mathfrak{t},\mathfrak{f},\overline{\mathfrak{t}},\overline{\mathfrak{f}}\}$.

A \emph{labelled formula} is a~construction of the form $\mathsf{w}:\phi;\mathfrak{v}$ with $\phi\in\Ltriangle$, $\mathsf{w}\in\mathsf{Lab}$, and $\mathfrak{v}\in\mathsf{Val}$.
\end{definition}

The interpretations of labelled formulas are summarised in the following table.
\begin{center}
\begin{tabular}{c|c}
\textbf{Labelled formula}&\textbf{Interpretation}\\\hline
$w:\phi;\mathfrak{t}$&$\mathfrak{M},w\vDash^+\phi$\\
$w:\phi;\mathfrak{f}$&$\mathfrak{M},w\vDash^-\phi$\\
$w:\phi;\overline{\mathfrak{t}}$&$\mathfrak{M},w\nvDash^+\phi$\\
$w:\phi;\overline{\mathfrak{f}}$&$\mathfrak{M},w\nvDash^-\phi$
\end{tabular}
\end{center}
\begin{convention}\label{conv:conjugatesandinverses}
We set
\begin{align*}
\overline{\overline{\mathfrak{t}}}&=\mathfrak{t}&\overline{\overline{\mathfrak{f}}}&=\mathfrak{f}&
\mathfrak{t}^\neg&=\mathfrak{f}&\mathfrak{f}^\neg&=\mathfrak{t}&
\overline{\mathfrak{t}}^\neg&=\overline{\mathfrak{f}}&\overline{\mathfrak{f}}^\neg&=\overline{\mathfrak{t}}
\end{align*}

For $\mathfrak{v}_1,\mathfrak{v}_2\in\mathsf{Val}$, we will write $w:\phi;\mathfrak{v_1};\mathfrak{v_2}$ as a~shorthand for $\{w:\phi;\mathfrak{v_1},w:\phi;\mathfrak{v_2}\}$.
\end{convention}

Let us now define the calculus formally.
\begin{definition}[$\mathbb{S}(\AFDE)$ --- analytic cut system for $\AFDE$]\label{def:AFDErules}
We define a~$\mathbb{S}(\AFDE)$-proof as a~downward branching tree whose nodes are labelled with sets containing labelled formulas and constructions of the form $w\mathsf{R}w'$. Each branch can be extended by one of the following rules (below, $w_{k_i}$'s are fresh in the branch, $i\neq j$).
\begin{spacing}{1}
\[\begin{array}{cccc}
\neg\mathfrak{t}:\dfrac{w:\neg\phi;\mathfrak{t}}{w:\phi;\mathfrak{f}}&\neg\mathfrak{f}:\dfrac{w:\neg\phi;\mathfrak{f}}{w:\phi;\mathfrak{t}}&\neg\overline{\mathfrak{t}}:\dfrac{w:\neg\phi;\overline{\mathfrak{t}}}{w:\phi;\overline{\mathfrak{f}}}&\neg\overline{\mathfrak{f}}:\dfrac{w:\neg\phi;\overline{\mathfrak{f}}}{w:\phi;\overline{\mathfrak{t}}}
\end{array}\]
\vspace{.5em}
\[\begin{array}{cccc}
\wedge\mathfrak{t}:\dfrac{w:\phi_1\wedge\phi_2;\mathfrak{t}}{\begin{matrix}w:\phi_1;\mathfrak{t}\\w:\phi_2;\mathfrak{t};\end{matrix}}&\wedge\mathfrak{f}:\dfrac{\begin{matrix}w:\phi_1\wedge\phi_2;\mathfrak{f}\\w:\phi_i;\overline{\mathfrak{f}}\end{matrix}}{w:\phi_j;\mathfrak{f}}&\wedge\overline{\mathfrak{t}}:\dfrac{\begin{matrix}w:\phi_1\wedge\phi_2;\overline{\mathfrak{t}}\\w:\phi_i;\mathfrak{t};\end{matrix}}{w:\phi_j;\overline{\mathfrak{t}}}&\wedge\overline{\mathfrak{f}}:\dfrac{w:\phi_1\wedge\phi_2;\overline{\mathfrak{f}}}{\begin{matrix}w:\phi_1;\overline{\mathfrak{f}}\\w:\phi_2;\overline{\mathfrak{f}}\end{matrix}}
\end{array}\]
\vspace{.5em}
\[\begin{array}{cccc}
\vee\mathfrak{t}:\dfrac{\begin{matrix}w:\phi_1\vee\phi_2;\mathfrak{t}\\w:\phi_i;\overline{\mathfrak{t}}\end{matrix}}{w:\phi_j;\mathfrak{t}}&\vee\mathfrak{f}:\dfrac{w:\phi_1\vee\phi_2;\mathfrak{f}}{\begin{matrix}w:\phi_1;\mathfrak{f}\\w:\phi_2;\mathfrak{f}\end{matrix}}&\vee\overline{\mathfrak{t}}:\dfrac{w:\phi_1\vee\phi_2;\overline{\mathfrak{t}}}{\begin{matrix}w:\phi_1;\overline{\mathfrak{t}}\\w:\phi_2;\overline{\mathfrak{t}}\end{matrix}}&\vee\overline{\mathfrak{f}}:\dfrac{\begin{matrix}w:\phi_1\vee\phi_2;\overline{\mathfrak{f}}\\w:\phi_i;\mathfrak{f}\end{matrix}}{w:\phi_j;\overline{\mathfrak{f}}}\\
\end{array}\]

\vspace{.3em}

\[\begin{array}{c}
\mathfrak{v}\overline{\mathfrak{v}}:\dfrac{}{w:\phi;\mathfrak{v}\mid w:\phi;\overline{\mathfrak{v}}}~\left(\parbox{15em}{$\phi$ is a~subformula of a~formula occurring on the branch; $w$ occurs on the branch}\right)
\end{array}\]

\vspace{.3em}

\[\begin{array}{ccc}
\blacktriangle_T\!:\!\dfrac{\begin{matrix}w_i\!:\!\blacktriangle\phi;\mathfrak{t};\overline{\mathfrak{f}}\\ w_i\mathsf{R}w_j\\w_j\!:\!\phi;\mathfrak{v}\end{matrix}}{w_j\!:\!\phi;\overline{\mathfrak{v}}^\neg}
&
\blacktriangle'_T\!:\!\dfrac{\begin{matrix}w_i\!:\!\blacktriangle\phi;\mathfrak{t};\overline{\mathfrak{f}}\\ w_i\mathsf{R}w_{j_1}\\w_i\mathsf{R}w_{j_2}\\w_{j_1}\!:\!\phi;\mathfrak{v};\overline{\mathfrak{v}}^\neg\end{matrix}}{w_{j_2}\!:\!\phi;\mathfrak{v};\overline{\mathfrak{v}}^\neg}
&
\blacktriangle_F\dfrac{w_i\!:\!\blacktriangle\phi;\mathfrak{f};\overline{\mathfrak{t}}}{\dfrac{\begin{matrix}w_i\mathsf{R}w_{k_1}\\w_i\mathsf{R}w_{k_2}\end{matrix}}{\left.\begin{matrix}w_{k_1}\!:\!\phi;\mathfrak{t}\\w_{k_2}\!:\!\phi;\overline{\mathfrak{t}}\end{matrix}\right|\begin{matrix}w_{k_1}\!:\!\phi;\mathfrak{f}\\w_{k_2}\!:\!\phi;\overline{\mathfrak{f}}\end{matrix}}}
\end{array}\]
\[\begin{array}{cccc}
\blacktriangle{}_{B}\!:\!\dfrac{\begin{matrix}w_i\!:\!\blacktriangle\phi;\mathfrak{t};\mathfrak{f}\\w_i\mathsf{R}w_j\end{matrix}}{\begin{matrix}w_j\!:\!\phi;\mathfrak{t};\mathfrak{f}\end{matrix}}
&
\blacktriangle{}^+_{B}\!:\!\dfrac{w_i\!:\!\blacktriangle\phi;\mathfrak{t};\mathfrak{f}}{\begin{matrix}w_i\mathsf{R}w_k\\w_k:\phi;\mathfrak{t};\mathfrak{f}\end{matrix}}
&
\blacktriangle{}_{N}\!:\!\dfrac{\begin{matrix}w_i\!:\!\blacktriangle\phi;\overline{\mathfrak{t}};\overline{\mathfrak{f}}\\w_i\mathsf{R}w_j\end{matrix}}{\begin{matrix}w_j:\phi;\overline{\mathfrak{t}};\overline{\mathfrak{f}}\end{matrix}}
&
\blacktriangle{}^+_{N}\!:\!\dfrac{w_i\!:\!\blacktriangle\phi;\overline{\mathfrak{t}};\overline{\mathfrak{f}}}{\begin{matrix}w_i\mathsf{R}w_k\\w_k:\phi;\overline{\mathfrak{t}};\overline{\mathfrak{f}}\end{matrix}}
\end{array}\]

\vspace{2em}
\end{spacing}

We say that a~branch $\mathcal{B}$ is closed iff the following condition is met. Otherwise, $\mathcal{B}$ is open.
\begin{itemize}
\item\label{closure1} $w_i:\phi;\mathfrak{v};\overline{\mathfrak{v}}\in\mathcal{B}$ for some $\phi\in\Ltriangle$, $w_i\in\mathsf{Lab}$, and $\mathfrak{v},\overline{\mathfrak{v}}\in\mathsf{Val}$.
\end{itemize}

An open branch $\mathcal{B}$ is \emph{complete} iff the following condition is met.
\begin{itemize}
\item If all premises of a~rule occur on the branch, then the conclusion\footnote{In the case of $\blacktriangle_F$ rule, at least one of two its conclusions should appear on the branch.} occurs on the branch as well.
\end{itemize}

A tree is closed iff every branch is closed.

Finally, we say that $\phi\vdash\chi$ is proved in $\mathbb{S}(\AFDE)$ iff there is a~closed tree whose root is $\{w\!:\!\phi;\mathfrak{t},~w\!:\!\chi;\overline{\mathfrak{t}}\}$.
\end{definition}
\begin{remark}
Let us clarify how the modal rules work. As one can see, they correspond to each Belnapian value of $\blacktriangle\phi$.
\begin{itemize}
\item $\blacktriangle_T$ guarantees that if $w_i\!:\!\blacktriangle\phi;\mathfrak{t};\overline{\mathfrak{f}}\in\mathcal{B}$, then $w_j:\phi;\mathfrak{t};\overline{\mathfrak{f}}\in\mathcal{B}$ for every $w_j$ s.t.\ $w_i\mathsf{R}w_j\in\mathcal{B}$ or $w_j:\phi;\mathfrak{f};\overline{\mathfrak{t}}\in\mathcal{B}$ for every $w_j$ s.t.\ $w_i\mathsf{R}w_j\in\mathcal{B}$. $\blacktriangle'_T$ ensures the closure of a~branch containing $w_i\!:\!\blacktriangle\phi;\mathfrak{t};\overline{\mathfrak{f}}$, $w_i\mathsf{R}w_{j_1}$, $w_i\mathsf{R}w_{j_2}$, $w_{j_1}\!:\!\phi;\mathfrak{t};\overline{\mathfrak{f}}$, and $w_{j_1}\!:\!\phi;\mathfrak{f};\overline{\mathfrak{t}}$.
\item $\blacktriangle_F$ adds two new accessible states to a~branch containing $w_i:\blacktriangle\phi;\overline{\mathfrak{t}};\mathfrak{f}$ and then splits the branch in two and gives $\phi$ different values in these added states.
\item $\blacktriangle^+_B$ adds a~new accessible state $w_k$ to a~branch containing $w_i\!:\!\blacktriangle\phi;\mathfrak{t};\mathfrak{f}$, s.t.\ $w_k\!:\!\phi;\mathfrak{t};\mathfrak{f}$. $\blacktriangle_B$ ensures that $w_j\!:\!\phi;\mathfrak{t};\mathfrak{f}$ in every accessible $w_j$.
\item Finally, $\blacktriangle^+_N$ and $\blacktriangle_N$ work dually to $\blacktriangle^+_B$ and $\blacktriangle_B$.
\end{itemize}
\end{remark}

We end the section with two proof trees: a~successful proof of $\blacktriangle p\vdash\blacktriangle\neg p$, and a~failed proof of $\blacktriangle(q\vee\neg q)$. For the latter, we show how to extract a~countermodel from a~complete open branch.
\begin{example}[Proofs]
A proof of $\blacktriangle p\vdash\blacktriangle\neg p$ is given in fig.~\ref{fig:goodproof}. A~failed proof can be seen in fig.~\ref{fig:badproof}. For the sake of brevity, we will not apply the $\mathfrak{v}\overline{\mathfrak{v}}$ rule to $q\vee\neg q$ at $w_0$ as it is clear that these applications will not make any open branch closed.
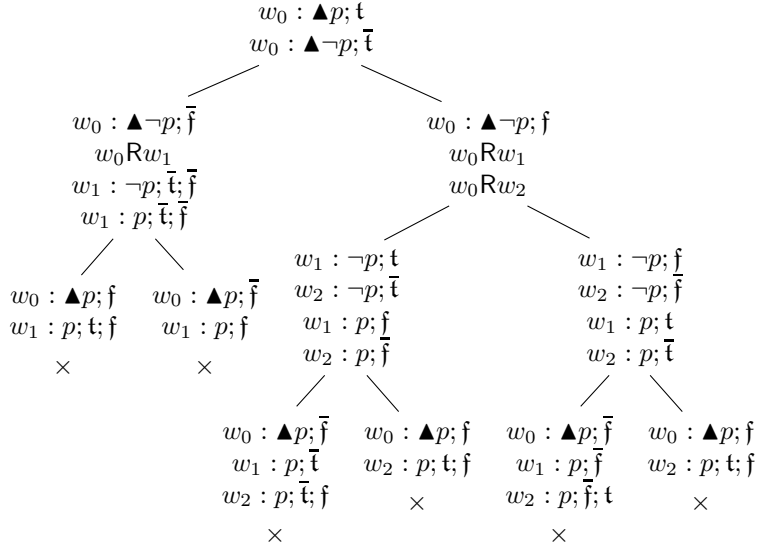
\begin{figure}
\centering
\begin{forest}
smullyan tableaux
[w_0:\blacktriangle p;\mathfrak{t}
[w_0:\blacktriangle\neg p;\overline{\mathfrak{t}}
[w_0:\blacktriangle\neg p;\overline{\mathfrak{f}}
[w_0\mathsf{R}w_1[w_1:\neg p;\overline{\mathfrak{t}};\overline{\mathfrak{f}}[w_1:p;\overline{\mathfrak{t}};\overline{\mathfrak{f}}
[w_0:\blacktriangle p;\mathfrak{f}[w_1:p;\mathfrak{t};\mathfrak{f},closed]][w_0:\blacktriangle p;\overline{\mathfrak{f}}[w_1:p;\mathfrak{f},closed]]
]]]
]
[w_0:\blacktriangle\neg p;\mathfrak{f}[w_0\mathsf{R}w_1[w_0\mathsf{R}w_2
[w_1:\neg p;\mathfrak{t}[w_2:\neg p;\overline{\mathfrak{t}}[w_1:p;\mathfrak{f}[w_2:p;\overline{\mathfrak{f}}[w_0:\blacktriangle p;\overline{\mathfrak{f}}[w_1:p;\overline{\mathfrak{t}}[w_2:p;\overline{\mathfrak{t}};\mathfrak{f},closed]]][w_0:\blacktriangle p;\mathfrak{f}[w_2:p;\mathfrak{t};\mathfrak{f},closed]]
]]]]
[w_1:\neg p;\mathfrak{f}[w_2:\neg p;\overline{\mathfrak{f}}[w_1:p;\mathfrak{t}[w_2:p;\overline{\mathfrak{t}}[w_0:\blacktriangle p;\overline{\mathfrak{f}}[w_1:p;\overline{\mathfrak{f}}[w_2:p;\overline{\mathfrak{f}};\mathfrak{t},closed]]][w_0:\blacktriangle p;\mathfrak{f}[w_2:p;\mathfrak{t};\mathfrak{f},closed]]
]]]]]]]]]]]]]
\end{forest}
\caption{A successful proof: all branches are closed.}
\label{fig:goodproof}
\end{figure}
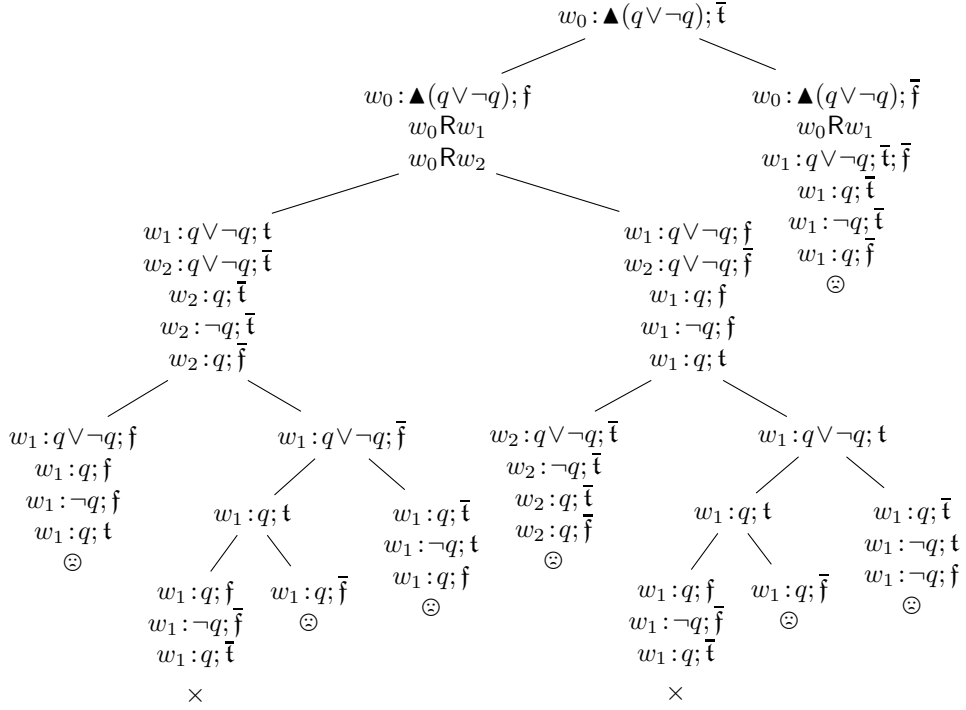
\begin{figure}
\centering
\begin{forest}
smullyan tableaux
[w_0\!:\!\blacktriangle(q\!\vee\!\neg q);\overline{\mathfrak{t}}
[w_0\!:\!\blacktriangle(q\!\vee\!\neg q);\mathfrak{f}[w_0\mathsf{R}w_1[w_0\mathsf{R}w_2
[w_1\!:\!q\!\vee\!\neg q;\mathfrak{t}[w_2\!:\!q\!\vee\!\neg q;\overline{\mathfrak{t}}[w_2\!:\!q;\overline{\mathfrak{t}}[w_2\!:\!\neg q;\overline{\mathfrak{t}}[w_2\!:\!q;\overline{\mathfrak{f}}
[w_1\!:\!q\!\vee\!\neg q;\mathfrak{f}[w_1\!:\!q;\mathfrak{f}[w_1\!:\!\neg q;\mathfrak{f}[w_1\!:\!q;\mathfrak{t}[\frownie]]]]][w_1\!:\!q\!\vee\!\neg q;\overline{\mathfrak{f}}
[w_1\!:\!q;\mathfrak{t}
[w_1\!:\!q;\mathfrak{f}[w_1\!:\!\neg q;\overline{\mathfrak{f}}[w_1\!:\!q;\overline{\mathfrak{t}},closed]]][w_1\!:\!q;\overline{\mathfrak{f}}[\frownie]]]
[w_1\!:\!q;\overline{\mathfrak{t}}[w_1\!:\!\neg q;\mathfrak{t}[w_1\!:\!q;\mathfrak{f}[\frownie]]]]
]
]]]]]
[w_1\!:\!q\!\vee\!\neg q;\mathfrak{f}[w_2\!:\!q\!\vee\!\neg q;\overline{\mathfrak{f}}[w_1\!:\!q;\mathfrak{f}[w_1\!:\!\neg q;\mathfrak{f}[w_1\!:\!q;\mathfrak{t}
[w_2\!:\!q\!\vee\!\neg q;\overline{\mathfrak{t}}[w_2\!:\!\neg q;\overline{\mathfrak{t}}[w_2\!:\!q;\overline{\mathfrak{t}}[w_2\!:\!q;\overline{\mathfrak{f}}[\frownie]]]]][w_1\!:\!q\!\vee\!\neg q;\mathfrak{t}
[w_1\!:\!q;\mathfrak{t}
[w_1\!:\!q;\mathfrak{f}[w_1\!:\!\neg q;\overline{\mathfrak{f}}[w_1\!:\!q;\overline{\mathfrak{t}},closed]]][w_1\!:\!q;\overline{\mathfrak{f}}[\frownie]]
]
[w_1\!:\!q;\overline{\mathfrak{t}}[w_1\!:\!\neg q;\mathfrak{t}[w_1\!:\!\neg q;\mathfrak{f}[\frownie]]]]
]
]]]]]
]]][w_0\!:\!\blacktriangle(q\!\vee\!\neg q);\overline{\mathfrak{f}}[w_0\mathsf{R}w_1[w_1\!:\!q\!\vee\!\neg q;\overline{\mathfrak{t}};\overline{\mathfrak{f}}[w_1\!:\!q;\overline{\mathfrak{t}}[w_1\!:\!\neg q;\overline{\mathfrak{t}}[w_1\!:\!q;\overline{\mathfrak{f}}[\frownie]]]]]]]
]
]
\end{forest}
\caption{A failed proof: complete open branches are denoted with $\frownie$.}
\label{fig:badproof}
\end{figure}

Let us now extract the countermodel of $\blacktriangle(q\vee\neg q)$ that corresponds to the leftmost complete open branch (fig.~\ref{fig:badproof}). As one can see, we need three states: $w_0$, $w_1$, and $w_2$, s.t.\ $R(w_0)=\{w_1,w_2\}$. It remains to use the entries of the branch to set the valuations. Note that the branch does not give values to $q$ at $w_0$ because we did not apply $\mathfrak{v}\overline{\mathfrak{v}}$ to $q\vee\neg q$, so we set them in an arbitrary manner. The model can be seen in fig.~\ref{fig:badproofexample}.
\begin{figure}
\centering
\begin{tikzpicture}[modal,node distance=0.5cm,world/.append style={minimum
size=1cm}]
\node[world] (w0) [label=below:{$w_0$}] {$q^\pm$};
\node[world] (w1) [left=of w0] [label=below:{$w_1$}] {$q^\pm$};
\node[world] (w2) [right=of w0] [label=below:{$w_2$}] {$\xcancel{q}$};
\path[->] (w0) edge (w1);
\path[->] (w0) edge (w2);
\end{tikzpicture}
\caption{The model corresponding to the leftmost open branch of fig.~\ref{fig:badproof}.}
\label{fig:badproofexample}
\end{figure}
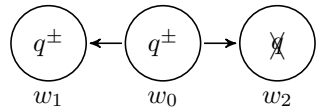
\end{example}
\subsection{Soundness and completeness}
We are now ready to prove the soundness and completeness results for our systems. We adapt the approach from~\cite{DAgostino1990}.
\begin{definition}[Branch realisation]\label{branchrealisation}
We say that $\mathfrak{M}=\langle W,R,v^+,v^-\rangle$ with $W=\{w:w\text{ occurs on }\mathcal{B}\}$, $R=\{\langle w_i,w_j\rangle:w_i\mathsf{R}w_j\in\mathcal{B}\}$, and $w\in v^+(p)$ ($w\in v^-(p)$) iff $w\!:\!p;\mathfrak{t}\in\mathcal{B}$ ($w\!:\!p;\mathfrak{f}\in\mathcal{B}$) realises a~branch $\mathcal{B}$ of a~tree iff the following conditions are met.
\begin{enumerate}
\item If $w\!:\!\phi;\mathfrak{t}\in\mathcal{B}$ ($w\!:\!\phi;\mathfrak{f}\in\mathcal{B}$), then $\mathfrak{M},w\vDash^+\phi$ ($\mathfrak{M},w\vDash^-\phi$, respectively).
\item If $w\!:\!\phi;\overline{\mathfrak{t}}\in\mathcal{B}$ ($w\!:\!\phi;\overline{\mathfrak{f}}\in\mathcal{B}$), then $\mathfrak{M},w\nvDash^+\phi$ ($\mathfrak{M},w\nvDash^-\phi$, respectively).
\end{enumerate}
\end{definition}
\begin{theorem}[Soundness of $\mathbb{S}(\AFDE)$]\label{AFDEsoundness}
If there is a~proof of $\phi\vdash\chi$ in $\mathbb{S}(\AFDE)$, then $\phi\vdash\chi$ is valid.
\end{theorem}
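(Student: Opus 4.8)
The plan is to prove soundness by the standard contrapositive argument: I will show that if $\phi\vdash\chi$ is \emph{not} valid, then every tree whose root is $\{w\!:\!\phi;\mathfrak{t},\,w\!:\!\chi;\overline{\mathfrak{t}}\}$ has an open branch, hence $\phi\vdash\chi$ is not provable. The engine of this argument is the notion of branch realisation from Definition~\ref{branchrealisation} together with the following key lemma: if a model $\mathfrak{M}$ realises a branch $\mathcal{B}$ and a rule of $\mathbb{S}(\AFDE)$ is applied to $\mathcal{B}$, then $\mathfrak{M}$ (suitably extended, if the rule introduces fresh state-labels) realises at least one of the resulting branches. This is a rule-by-rule verification driven by the truth and falsity conditions of Definition~\ref{def:AFDEsemantics}.

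First I would set up the base case: given that $\phi\vdash\chi$ is not valid, Definition~\ref{def:validity} yields a pointed model $\langle\mathfrak{M}_0,w\rangle$ with $\mathfrak{M}_0,w\vDash^+\phi$ and $\mathfrak{M}_0,w\nvDash^+\chi$; taking $W$ to contain $w$ (and relabelling so the state-label matches) we get a model realising the root set. Then I would argue that realisability is preserved under rule application. For the propositional rules ($\neg$, $\wedge$, $\vee$ in all four value-label versions) this is immediate from the clauses for $\neg,\wedge,\vee$ and the interpretation table for $\mathfrak{t},\mathfrak{f},\overline{\mathfrak{t}},\overline{\mathfrak{f}}$; for the branching ones ($\wedge\mathfrak{f}$, $\vee\mathfrak{t}$, etc., and the analytic cut rule $\mathfrak{v}\overline{\mathfrak{v}}$) the model satisfies at least one disjunct of the conclusion. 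For the modal rules I would use that $w_i\!:\!\blacktriangle\phi;\mathfrak{t};\overline{\mathfrak{f}}$ in a realising model means condition~\eqref{t1conditionI} holds (the value of $\phi$ is constant across $R(w_i)$), which justifies $\blacktriangle_T$ and $\blacktriangle'_T$; that $w_i\!:\!\blacktriangle\phi;\mathfrak{f};\overline{\mathfrak{t}}$ means one of \eqref{f1conditionI}, \eqref{f2conditionI}, \eqref{fconditionS} holds, which provides the two witness worlds for $\blacktriangle_F$ (and here the model chooses which of the two conclusion-branches to realise); and that $w_i\!:\!\blacktriangle\phi;\mathfrak{t};\mathfrak{f}$ (resp.\ $\overline{\mathfrak{t}};\overline{\mathfrak{f}}$) forces $R(w_i)\neq\varnothing$ with $\phi$ both-valued (resp.\ gap-valued) everywhere on $R(w_i)$, justifying $\blacktriangle_B,\blacktriangle^+_B$ (resp.\ $\blacktriangle_N,\blacktriangle^+_N$)---in the $\,^+$-rules the fresh world $w_k$ is taken to be any actually existing successor guaranteed by the seriality that these value-labels impose. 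The one subtlety to flag is that the $\,^+$-rules and $\blacktriangle_F$ introduce labels declared fresh in the branch, so I must extend the domain $W$ of the realising model by picking genuine witness worlds of $\mathfrak{M}$ and identifying the fresh labels with them; since these labels are new, no prior entry of $\mathcal{B}$ constrains them and realisation of the old entries is undisturbed.

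Finally I would run the induction: starting from the realising model for the root, at each rule application pass to a child branch that is still realised; since a realised branch can never contain a pair $w_i\!:\!\phi;\mathfrak{v};\overline{\mathfrak{v}}$ (that would require both $\mathfrak{M},w_i\vDash^+\phi$ and $\mathfrak{M},w_i\nvDash^+\phi$, or the falsity analogue, which is impossible), a realised branch is open. Hence in any tree with this root some branch fails to close, so no closed tree exists and $\phi\vdash\chi$ is not provable. The main obstacle---really the only place needing care rather than routine checking---is the bookkeeping for the fresh-label modal rules $\blacktriangle_F$, $\blacktriangle^+_B$, $\blacktriangle^+_N$: I must make precise how the realising model is enlarged and verify that the value-label $\blacktriangle\phi;\mathfrak{f}$ etc.\ genuinely supplies the required witnesses (this is exactly where \eqref{f1conditionI}--\eqref{fconditionS} and the non-emptiness clauses b.--c.\ are used), and confirm that the ``at least one conclusion'' proviso for $\blacktriangle_F$ in Definition~\ref{def:AFDErules} is matched by the disjunctive reading of those conditions.
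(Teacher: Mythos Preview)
Your proposal is correct and follows essentially the same route as the paper: both argue contrapositively via preservation of branch realisation under rule application, observing that a realised branch can never be closed. The paper's proof is a two-line sketch (``It is easy to check that the rules are sound\ldots''), whereas you spell out the rule-by-rule verification and are more explicit about the bookkeeping for the fresh-label modal rules $\blacktriangle_F$, $\blacktriangle^+_B$, $\blacktriangle^+_N$---details the paper leaves implicit.
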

\begin{proof}
It is easy to check that the rules are sound in the sense that if a~branch is realised by a~model, then its extension by any rule is realised too. On the other hand, a~closed branch clearly cannot be realised. Thus, if the tree is closed, then the initial labelled formulas are not realisable. But in order to prove $\phi\vdash\chi$, we start a~tree with $\{\phi\!:\!\mathfrak{t};w_0,\chi\!:\!\overline{\mathfrak{t}};w_0\}$. Hence, if this set cannot be realised, the sequent is valid.
\end{proof}
\begin{theorem}[Completeness of $\mathbb{S}(\AFDE)$]\label{AFDEcompleteness}
Any $\AFDE$-valid sequent $\phi\vdash\chi$ is provable, i.e., there is a~closed tree whose initial node is $\{w_0:\phi;\mathfrak{t},w_0:\chi;\overline{\mathfrak{t}}\}$.
\end{theorem}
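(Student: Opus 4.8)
The plan is to prove completeness by contraposition, via the usual Hintikka/model-extraction argument for tableau-style calculi: assuming $\phi\vdash\chi$ has no $\mathbb{S}(\AFDE)$-proof, I would build a complete open branch and read a refuting model off it, thereby showing $\phi\vdash\chi$ is not valid.

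\emph{First}, I would fix a \emph{systematic} (fair) way of growing the tree from the root $\{w_0:\phi;\mathfrak{t},\,w_0:\chi;\overline{\mathfrak{t}}\}$: repeatedly apply every applicable rule --- crucially including the analytic cut $\mathfrak{v}\overline{\mathfrak{v}}$ to every subformula occurring on a branch at every state-label occurring on that branch --- with the proviso that each rule is used at most once per tuple of premises. I would then check that every branch of the resulting tree is finite. Each rule produces only labelled formulas $w:\psi;\mathfrak{v}$ with $\psi$ a subformula of something already present, so only finitely many formulas ever occur; and new state-labels are generated only by $\blacktriangle_F$, $\blacktriangle^+_B$, $\blacktriangle^+_N$, each triggered by some $w_i:\blacktriangle\psi;\dots$ and introducing successors that carry $\psi$, a formula of strictly smaller modal depth --- so the state-labels fall into finitely many ‘generations’, bounded by the modal depth of $\phi$ and $\chi$, each generation finite. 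Hence the construction terminates and every branch is complete in the sense of Definition~\ref{def:AFDErules}. Since $\phi\vdash\chi$ is assumed unprovable, no closed tree with this root exists, so the tree just built is not closed and therefore has a complete \emph{open} branch $\mathcal{B}$.

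\emph{Second}, from $\mathcal{B}$ I would take the structure $\mathfrak{M}=\langle W,R,v^+,v^-\rangle$ of Definition~\ref{branchrealisation} (with $w_0\in W$, so $W\neq\varnothing$) and prove the \emph{truth lemma}: $\mathfrak{M}$ realises $\mathcal{B}$, by induction on formula complexity. The atomic case is immediate from the definitions of $v^+,v^-$ plus openness of $\mathcal{B}$ (which forbids $w:p;\mathfrak{t}$ and $w:p;\overline{\mathfrak{t}}$, or $w:p;\mathfrak{f}$ and $w:p;\overline{\mathfrak{f}}$, from occurring together). For the propositional connectives I would run the standard analytic-cut argument: whenever a decomposition rule needs a side-premise (e.g.\ $\wedge\mathfrak{f}$ needs $w:\phi_i;\overline{\mathfrak{f}}$ to conclude $w:\phi_j;\mathfrak{f}$), completeness of $\mathcal{B}$ supplies it via a prior application of $\mathfrak{v}\overline{\mathfrak{v}}$, after which one case follows from the rule plus the induction hypothesis and the other directly from the induction hypothesis; the remaining connective/value combinations are analogous.

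\emph{Third} --- and this is where I expect the real work --- comes the modal case. For a labelled formula $w:\blacktriangle\psi;\mathfrak{v}$ on $\mathcal{B}$, I would first use cut on $\blacktriangle\psi$ at $w$ (for both $\mathfrak{t}$ and $\mathfrak{f}$) together with openness to pin down a unique Belnapian value of $\blacktriangle\psi$ at $w$, and then verify, for each of the four possibilities, the matching clause a.--d.\ of the semantics:
\begin{itemize}
\item for $\blacktriangle\psi;\mathfrak{t};\overline{\mathfrak{f}}$: cut on $\psi$ at each successor together with $\blacktriangle_T$ makes $\psi$ true-and-not-false or false-and-not-true at each successor, and $\blacktriangle'_T$ with openness forces the same choice at all successors, so the induction hypothesis yields \eqref{t1conditionI} and \eqref{t2conditionI} and the failure of \eqref{f1conditionI}, \eqref{f2conditionI}, \eqref{fconditionS};
\item for $\blacktriangle\psi;\mathfrak{t};\mathfrak{f}$: $\blacktriangle^+_B$ supplies a successor $w_k$ with $\psi$ both-valued and $\blacktriangle_B$ propagates this to all successors, giving \eqref{t1conditionI}, \eqref{t2conditionI} and \eqref{fconditionS} (with $w_1=w_2=w_k$);
\item for $\blacktriangle\psi;\overline{\mathfrak{t}};\overline{\mathfrak{f}}$: $\blacktriangle^+_N$ and $\blacktriangle_N$ make $\psi$ gap-valued at some (hence all) successor(s), refuting \eqref{t2conditionI} and all of \eqref{f1conditionI}, \eqref{f2conditionI}, \eqref{fconditionS};
\item for $\blacktriangle\psi;\overline{\mathfrak{t}};\mathfrak{f}$: $\blacktriangle_F$ (with completeness picking one of its two conclusions) yields two successors disagreeing on $\psi$, giving \eqref{f1conditionI} or \eqref{f2conditionI} and refuting a conjunct of \eqref{t1conditionI}.
\end{itemize}
Checking each subcase against the precise support-of-truth and support-of-falsity conditions for $\blacktriangle$ is the bookkeeping-heavy part and the main obstacle; a secondary delicate point is the finiteness/termination argument of the first step, which is what guarantees a complete branch actually exists. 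Once the truth lemma is in hand the conclusion is immediate: $w_0:\phi;\mathfrak{t}$ and $w_0:\chi;\overline{\mathfrak{t}}$ on $\mathcal{B}$ give $\mathfrak{M},w_0\vDash^+\phi$ and $\mathfrak{M},w_0\nvDash^+\chi$, so $\phi\vdash\chi$ is not valid, contradicting the hypothesis read contrapositively.
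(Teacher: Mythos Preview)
Your approach is essentially the same as the paper's: contraposition, extraction of the model $\mathfrak{M}$ of Definition~\ref{branchrealisation} from a complete open branch, and a truth lemma by induction on formulas, with the modal case handled by a case split on the Belnapian value of $\blacktriangle\psi$ at $w$ obtained via cut plus openness. The one difference is that you add a termination argument to justify the existence of a complete open branch, which the paper simply takes for granted; otherwise the organisation of the $\blacktriangle$ subcases (by the four values $T,B,N,F$) mirrors the paper's treatment exactly.
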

\begin{proof}
We proceed by contraposition. We need to show that every complete open branch is realisable. We now show by induction on $\phi$ that $w:\phi;\mathfrak{t}$ iff $\mathfrak{M},w\vDash^+\phi$ and $w:\phi;\mathfrak{f}$ iff $\mathfrak{M},w\vDash^-\phi$ with $\mathfrak{M}$ as in Definition~\ref{branchrealisation}. The basis case of $\phi=p$ holds by the construction of $\mathfrak{M}$. The propositional cases are straightforward. Thus, we are going to consider only the most instructive case of $\phi=\blacktriangle\phi'$.

Let $w_i:\blacktriangle\phi';\mathfrak{t}\in\mathcal{B}$. Since $\mathcal{B}$ is complete, either (1) $w_i:\blacktriangle\phi';\mathfrak{f}\in\mathcal{B}$ or (2) $w_i:\blacktriangle\phi';\overline{\mathfrak{f}}\in\mathcal{B}$ ($\blacktriangle\phi'$ is its own subformula and $w_i$ is on the branch, so we can apply $\mathfrak{v}\overline{\mathfrak{v}}$ rule).

Consider (1). By rules $\blacktriangle{}B$ and $\blacktriangle{}B^+$, there is at least one state $w_j$ s.t.\ $w_i\mathsf{R}w_j\in\mathcal{B}$ and for all such $w_j$'s we have $w_j:\phi';\mathfrak{t};\mathfrak{f}$. Hence, by the induction hypothesis, there are some states $w_j\in R(w_i)$ and in all of them, $\mathfrak{M},w_j\vDash^+\phi'$ and $\mathfrak{M},w_j\vDash^-\phi'$. But then $\mathfrak{M},w_i\vDash^+\blacktriangle\phi'$ and $\mathfrak{M},w_i\vDash^-\blacktriangle\phi'$.

Consider (2). Then $w_i:\blacktriangle\phi';\mathfrak{t};\overline{\mathfrak{f}}\in\mathcal{B}$. If there is no $w_j$ s.t.\ $w_i\mathsf{R}w_j\in\mathcal{B}$, then $R(w_i)=\varnothing$, whence $\mathfrak{M},w_i\vDash^+\blacktriangle\phi'$ and $\mathfrak{M},w_i\nvDash^-\blacktriangle\phi'$, as required. Otherwise, since $\mathcal{B}$ is open, by rules $\blacktriangle'_T$ and $\blacktriangle_T$, we have two options. (i) $w_j\!:\!\phi';\mathfrak{t};\overline{\mathfrak{f}}\in\mathcal{B}$ for every $w_j$ s.t.\ $w_i\mathsf{R}w_j\in\mathcal{B}$ or (ii) $w_j\!:\!\phi';\mathfrak{f};\overline{\mathfrak{t}}\in\mathcal{B}$ for every $w_j$ s.t.\ $w_i\mathsf{R}w_j\in\mathcal{B}$.

In the first case, we obtain that $\mathfrak{M},w_j\vDash^+\phi$ and $\mathfrak{M},w_j\nvDash^-\phi$ for every $w_j\in R(w_i)$, by the induction hypothesis. Then, $\mathfrak{M},w_i\vDash^+\blacktriangle\phi$ and $\mathfrak{M},w_i\nvDash^-\blacktriangle\phi$, as required. In the second case, we obtain that $\mathfrak{M},w_j\nvDash^+\phi$ and $\mathfrak{M},w_j\vDash^-\phi$ for every $w_j\in R(w_i)$, by the induction hypothesis. Then, again, $\mathfrak{M},w_i\vDash^+\blacktriangle\phi$ and $\mathfrak{M},w_i\nvDash^-\blacktriangle\phi$, as required.

To show the converse, assume that $w_i\blacktriangle\phi';\mathfrak{t}\notin\mathcal{B}$. By completeness of $\mathcal{B}$ we have that $w_i:\blacktriangle\phi';\overline{\mathfrak{t}}\in\mathcal{B}$ and exactly one of the following options hold: either (i)~$w_i:\blacktriangle\phi';\mathfrak{f}\in\mathcal{B}$ or (ii) $w_i:\blacktriangle\phi';\overline{\mathfrak{f}}\in\mathcal{B}$.

In the first case, using $\blacktriangle_F$, we add new states $w_{k_1}$ and $w_{k_2}$ s.t.\ $w_i\mathsf{R}w_{k_1}\in\mathcal{B}$ and $w_i\mathsf{R}w_{k_2}\in\mathcal{B}$. Moreover, $\blacktriangle_F$ gives us that
\begin{enumerate}
\item[(a)] $w_{k_1}\!:\!\phi;\mathfrak{t}\in\mathcal{B}$  and $w_{k_2}\!:\!\phi;\overline{\mathfrak{t}}\in\mathcal{B}$, or
\item[(b)] $w_{k_1}\!:\!\phi;\mathfrak{f}\in\mathcal{B}$  and $w_{k_2}\!:\!\phi;\overline{\mathfrak{f}}\in\mathcal{B}$.
\end{enumerate}
By the induction hypothesis, we have that either $\mathfrak{M},w_{k_1}\vDash^+\phi'$ and $\mathfrak{M},w_{k_2}\nvDash^+\phi'$ or $\mathfrak{M},w_{k_1}\vDash^-\phi'$ and $\mathfrak{M},w_{k_2}\nvDash^-\phi'$. But these are precisely conditions~\eqref{f1conditionI} and~\eqref{f2conditionI} from Definition~\ref{def:AFDEsemantics}. Thus, $\mathfrak{M},w_i\vDash^-\blacktriangle\phi'$. But note that condition~\eqref{t1conditionI} is failed, whence $\mathfrak{M},w_i\nvDash^+\blacktriangle\phi'$.

In the second case, $\blacktriangle_N$ and $\blacktriangle^+_N$ guarantee us that there is some $w'$ s.t.\ $w_i\mathsf{R}w'\in\mathcal{B}$ and that $w_k:\phi';\overline{\mathfrak{t}};\overline{\mathfrak{f}}\in\mathcal{B}$ for all $w_k$ s.t.\ $w_i\mathsf{R}w_k\in\mathcal{B}$. By the induction hypothesis, it entails that for any state $w_k\in R(w_i)$ $\mathfrak{M},w_k\nvDash^+\phi'$ and $\mathfrak{M},w_k\nvDash^-\phi'$. Which means that conditions~\eqref{t1conditionI}, \eqref{fconditionS}, \eqref{f1conditionI}, and \eqref{f2conditionI} are failed and thus $\mathfrak{M},w_i\nvDash^+\blacktriangle\phi'$ and $\mathfrak{M},w_i\nvDash^-\blacktriangle\phi'$ as desired.

The case of $w_i:\blacktriangle\phi';\mathfrak{t}\in\mathcal{B}$ can be tackled in the same manner.
\end{proof}

We end the section with two corollaries from completeness. First of all, using Theorem~\ref{theorem:AFDEcontraposition}, we obtain the following statement.
\begin{corollary}\label{KEtheorem:AFDEcontraposition}
The $\mathbb{S}(\AFDE)$ tree for $\{\phi\!:\!\mathfrak{t};w,\chi\!:\!\overline{\mathfrak{t}};w\}$ is closed iff the tree for $\{\phi\!:\!\overline{\mathfrak{f}};w,\chi\!:\!\mathfrak{f};w\}$ is closed.
\end{corollary}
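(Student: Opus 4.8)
The plan is to reduce both closure statements to the validity of $\phi\vdash\chi$ and then invoke Theorem~\ref{theorem:AFDEcontraposition}. First I would observe that the proofs of Theorems~\ref{AFDEsoundness} and~\ref{AFDEcompleteness} never really use the particular shape of the root: what they establish is the general fact that, for an arbitrary finite set $X$ of labelled formulas, a completed $\mathbb{S}(\AFDE)$-tree with root $X$ is closed if and only if $X$ is not realised by any pointed model in the sense of Definition~\ref{branchrealisation}. Indeed, the rule-by-rule check in the proof of Theorem~\ref{AFDEsoundness} shows that realisability of a branch is preserved upward (for the branching rules $\mathfrak{v}\overline{\mathfrak{v}}$ and $\blacktriangle_F$ at least one successor stays realised), so a realisable root always leaves an open branch; conversely, the induction in the proof of Theorem~\ref{AFDEcompleteness} shows that any complete open branch is realised, hence a non-realisable root forces every branch of a completed tree to close.

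Next I would translate the two root sets into model-theoretic statements. The set $\{w\!:\!\phi;\mathfrak{t},\,w\!:\!\chi;\overline{\mathfrak{t}}\}$ is realised by some pointed model exactly when there is $\mathfrak{M},w$ with $\mathfrak{M},w\vDash^+\phi$ and $\mathfrak{M},w\nvDash^+\chi$, i.e.\ exactly when $\phi\vdash\chi$ is \emph{not} valid by Definition~\ref{def:validity}. Likewise, $\{w\!:\!\phi;\overline{\mathfrak{f}},\,w\!:\!\chi;\mathfrak{f}\}$ is realised by some pointed model exactly when there is $\mathfrak{M},w$ with $\mathfrak{M},w\nvDash^-\phi$ and $\mathfrak{M},w\vDash^-\chi$; by the characterisation of validity via non-falsity preservation in Theorem~\ref{theorem:AFDEcontraposition}, this happens precisely when $\phi\vdash\chi$ is not valid.

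Putting these together, the tree for $\{w\!:\!\phi;\mathfrak{t},\,w\!:\!\chi;\overline{\mathfrak{t}}\}$ is closed iff $\phi\vdash\chi$ is valid iff the tree for $\{w\!:\!\phi;\overline{\mathfrak{f}},\,w\!:\!\chi;\mathfrak{f}\}$ is closed, which is the desired equivalence. I do not expect a genuine obstacle here; the only points requiring a little care are making explicit that Theorems~\ref{AFDEsoundness} and~\ref{AFDEcompleteness} deliver the ``closed iff non-realisable'' equivalence for an arbitrary root rather than only for roots of the form $\{w_0\!:\!\phi;\mathfrak{t},w_0\!:\!\chi;\overline{\mathfrak{t}}\}$, and that one works with a \emph{completed} tree on the completeness side so that a complete open branch is actually available whenever the tree fails to close.
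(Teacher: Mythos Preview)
Your proposal is correct and follows the same route the paper intends: the corollary is stated immediately after soundness and completeness with the remark ``using Theorem~\ref{theorem:AFDEcontraposition}'', i.e., combine the closed-iff-non-realisable equivalence delivered by Theorems~\ref{AFDEsoundness} and~\ref{AFDEcompleteness} with the non-falsity-preservation characterisation of validity from Theorem~\ref{theorem:AFDEcontraposition}. Your extra care in noting that the soundness and completeness arguments apply to arbitrary roots (not just those of the form $\{w_0\!:\!\phi;\mathfrak{t},w_0\!:\!\chi;\overline{\mathfrak{t}}\}$) and in working with completed trees is exactly the small gap the paper leaves implicit.
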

Secondly, completeness gives rise to the subformula property.
\begin{corollary}\label{subformulaproperty}
If there is a~$\mathbb{S}(\AFDE)$ proof of $\phi\vdash\chi$, then all labelled formulas appearing in it are subformulas of either $\phi$ or $\chi$.
\end{corollary}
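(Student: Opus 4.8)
The plan is a routine induction on the construction of the $\mathbb{S}(\AFDE)$ proof tree. The single observation driving it is that every rule of the calculus is \emph{analytic}: whenever a rule extends a branch, each newly appended labelled formula $w\!:\!\psi;\mathfrak{v}$ has $\psi$ equal to a subformula of a formula already occurring on that branch. The relational items $w\mathsf{R}w'$ carry no formulas, and the freshly introduced state labels contribute none, so neither affects the bound.

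Concretely, I would fix a closed tree with root $\{w_0\!:\!\phi;\mathfrak{t},\,w_0\!:\!\chi;\overline{\mathfrak{t}}\}$ — such a tree exists by the hypothesis of the corollary, while Theorem~\ref{AFDEcompleteness} is what guarantees one whenever $\phi\vdash\chi$ is valid and thus makes the statement non-vacuous. The induction is on the number of rule applications used to build the tree, with hypothesis: in every node produced so far, each labelled formula $w\!:\!\psi;\mathfrak{v}$ has $\psi$ a subformula of $\phi$ or of $\chi$. The base case is the root, which mentions only $\phi$ and $\chi$. For the inductive step I would run through the rule schemata of Definition~\ref{def:AFDErules}: in each propositional rule the conclusion's formula is an immediate subformula of the principal formula; in each modal rule ($\blacktriangle_T$, $\blacktriangle'_T$, $\blacktriangle_F$, $\blacktriangle_B$, $\blacktriangle^+_B$, $\blacktriangle_N$, $\blacktriangle^+_N$) the principal formula is some $\blacktriangle\psi$ and every conclusion is built from $\psi$, a subformula of $\blacktriangle\psi$; and for the analytic cut $\mathfrak{v}\overline{\mathfrak{v}}$ the side condition already states that the cut formula is a subformula of a formula on the branch. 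In the first two cases the induction hypothesis yields the bound directly; in the cut case one additionally invokes transitivity of the subformula relation. Since these cover all rules, every labelled formula in the tree mentions a subformula of $\phi$ or $\chi$.

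I do not expect a genuine obstacle here: the subformula restriction is literally encoded in the side condition of the $\mathfrak{v}\overline{\mathfrak{v}}$ rule — exactly the point of adopting D'Agostino's analytic-cut design — so the property is essentially forced by Definition~\ref{def:AFDErules}. The only points needing a moment's care are (i) verifying that no rule, and in particular none of the modal rules despite their introduction of fresh states, can slip in a formula that is not a subformula of the principal formula, and (ii) remembering to use transitivity of ``is a subformula of'' in the cut case rather than only the immediate-subformula relation used for the other rules.
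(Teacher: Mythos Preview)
Your argument is correct: the direct induction on the construction of the proof tree, checking that every rule of Definition~\ref{def:AFDErules} only introduces subformulas of formulas already present, is exactly the standard way to establish the subformula property for an analytic-cut calculus, and it goes through without difficulty here.

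The paper itself gives no proof of this corollary; it merely records it as a byproduct, with the surrounding text saying that ``completeness gives rise to the subformula property''. Your route is more direct and does not in fact depend on completeness at all: the property is, as you say, forced by the side condition on $\mathfrak{v}\overline{\mathfrak{v}}$ together with the shape of the remaining rules. Your invocation of Theorem~\ref{AFDEcompleteness} to argue non-vacuity is harmless but unnecessary, since the corollary is a conditional (``if there is a proof \ldots'') and so is established once you show that \emph{every} proof tree has the property, regardless of whether any proof exists.
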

\section{Expressivity of $\AFDE$}\label{sec:expressivity}
Recall that the classical non-contingency operator $\triangle$ can be defined in terms of the classical necessity operator $\Box$ on any class of frames as follows: $\Box p\vee\Box\neg p\equiv\triangle p$. On the other hand, $\Box$ cannot be defined in terms of $\triangle$ unless the underlying frame is reflexive. It means (cf., e.g.~\cite[Propositions~3.2--3.4]{FanWangvanDitmarsch2015}) that (unless the frame is reflexive) $\triangle$ is strictly less expressive than $\Box$. It is thus instructive to compare expressivity of $\blacktriangle$ with that of $\Box$ as it is given in a~modal expansion of $\mathbf{FDE}$ (cf., e.g.~\cite[\S11a.4]{Priest2008FromIftoIs} and~\cite[\S2]{OdintsovWansing2017}) dubbed $\mathbf{K_{FDE}}$\footnote{Note, however, that while it is the most straightforward definition of $\Box$ in Belnap --- Dunn Logic, there are several alternative definitions given, e.g., in~\cite{Drobyshevich2020}. Moreover, to the best of our knowledge, there is no general notion of a~‘modal logic over $\mathbf{FDE}$’, in contrast to ‘classical modal logic’.}.
\begin{equation}
\label{tconditionBox}
\tag{$t\Box$}
\mathfrak{M},w\vDash^+\Box\phi\Leftrightarrow\forall w':\text{if }R(w,w'),\text{ then }\mathfrak{M},w'\vDash^+\phi
\end{equation}
\begin{equation}
\label{fconditionBox}
\tag{$f\Box$}
\mathfrak{M},w\vDash^-\Box\phi\Leftrightarrow\exists w':R(w,w')\text{ and }\mathfrak{M},w'\vDash^-\phi
\end{equation}
\begin{convention}
We will further denote the language over $\{\neg,\wedge,\vee,\Box\}$ with $\Lbox$. $\lozenge\phi$~is a~shorthand for $\neg\Box\neg\phi$ and $\blacktriangledown\phi$ is a~shorthand for $\neg\blacktriangle\phi$.
\end{convention}

\begin{definition}\label{def:formuladefinability}
Let $\mathcal{L}_1$ and $\mathcal{L}_2$ be two languages and let $\mathbb{K}$ be a~class of frames. We say that $\phi\in\mathcal{L}_1$ \emph{defines} $\chi\in\mathcal{L}_2$ \emph{in $\mathbb{K}$} iff for any $\mathfrak{F}\in\mathbb{K}$ and for any pointed model $\langle\mathfrak{M},w\rangle$ on $\mathfrak{F}$, it holds that
\begin{align*}
\mathfrak{M},w\vDash^+\phi&\text{ iff }\mathfrak{M},w\vDash^+\chi&
\mathfrak{M},w\vDash^-\phi&\text{ iff }\mathfrak{M},w\vDash^-\chi
\end{align*}
\end{definition}

Note first of all, that in contrast to the classical case $\blacktriangle p$ is not definable as $\Box p\vee\Box\neg p$. The model in fig.~\ref{fig:trueandfalse} serves as a~counterexample: $\mathfrak{M},w_0\nvDash^+\blacktriangle p$ while $\mathfrak{M},w_0\vDash^+\Box p\vee\Box\neg p$.

This is reasonable to expect since $\Box p\vee\Box\neg p$ is interpreted as ‘p is true in all accessible states or false in all accessible states’ but $\blacktriangle p$ means ‘the value of $p$ is the same in all accessible states’ which is a~stronger condition. Indeed, one can see from \eqref{tconditionBox} and \eqref{fconditionBox} that $\blacktriangle p\vdash\Box p\vee\Box\neg p$ is valid.

The following theorem shows that no $\Lbox$ formula can define $\blacktriangle p$ at all.
\begin{theorem}\label{theorem:trianglenotdefinable}
There is no formula $\phi\in\Lbox$ that defines $\blacktriangle p$ on the classes of all frames, all reflexive frames, all transitive frames, all symmetric frames, and all Euclidean frames.
\end{theorem}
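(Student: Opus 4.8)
The plan is to exhibit, for each of the five frame classes, a single pair of pointed models that agree on the support of truth and support of falsity of every $\Lbox$-formula but disagree on $\blacktriangle p$. Since $\blacktriangle p$ is one fixed formula, it suffices to find pointed models $\langle\mathfrak{M}_1,w_1\rangle$ and $\langle\mathfrak{M}_2,w_2\rangle$, both built on a frame in the class under consideration, such that (a) $\mathfrak{M}_1,w_1\vDash^+\psi\Leftrightarrow\mathfrak{M}_2,w_2\vDash^+\psi$ and $\mathfrak{M}_1,w_1\vDash^-\psi\Leftrightarrow\mathfrak{M}_2,w_2\vDash^-\psi$ for all $\psi\in\Lbox$, but $\mathfrak{M}_1,w_1\vDash^+\blacktriangle p$ while $\mathfrak{M}_2,w_2\nvDash^+\blacktriangle p$ (or some analogous disagreement on $\vDash^+$ or $\vDash^-$). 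The indistinguishability in (a) would be proved by an easy induction on $\Lbox$-formulas, so the real content is the choice of the two models.

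The natural candidates come from the observation already made in the excerpt: $\blacktriangle p$ strictly refines $\Box p\vee\Box\neg p$, and the refinement only shows up when some successor carries a glut or a gap. For the class of all frames, I would take $\mathfrak{M}_1$ to be a single reflexive point where $p$ is true-and-not-false, versus $\mathfrak{M}_2$ a root $w_2$ seeing a single successor where $p$ is both true and false (as in fig.~\ref{fig:trueandfalse}, but with the successor as the only accessible world, so that $\Box$-behaviour is unaffected). More robustly — and this is what makes the single construction work for \emph{all five} classes simultaneously — I would use the $\{\mathbf{T},\mathbf{B},\mathbf{N},\mathbf{F}\}$-valued reformulation and note that $\Box\phi$ and $\Box\neg\phi$ only ever ``see'' whether each successor supports $\phi^+$ (resp.\ $\phi^-$); so two models that are bisimilar for the $\Box$-fragment but differ in that one successor has value $\mathbf{B}$ instead of $\mathbf{T}$ will be $\Lbox$-indistinguishable at the root while differing on $\blacktriangle p$. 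Concretely: let $\mathfrak M_1$ have two $R$-successors of $w_1$, both with $p$ true-and-not-false; let $\mathfrak M_2$ have two $R$-successors of $w_2$, one with $p$ true-and-not-false and one with $p$ both-true-and-false. Then $\blacktriangle p$ is true at $w_1$ but false (and the $t_1\blacktriangle$ clause fails) at $w_2$, whereas every successor in each model supports $p^+$ and none supports $p^-$ in $\mathfrak M_1$ while exactly one does in $\mathfrak M_2$ — wait, that last asymmetry is visible to $\Box\neg p$, so I would instead give \emph{both} successors in $\mathfrak M_2$ the value $\mathbf B$ and both in $\mathfrak M_1$ the value $\mathbf T$: then $\Box p$ holds at both roots, $\Box\neg p$ holds at neither root's... no: at $w_2$ both successors are $\mathbf B$, so $\Box\neg p$ \emph{does} hold there. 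The clean fix, which I expect to be the actual construction, is to make the successor worlds in $\mathfrak M_1$ and $\mathfrak M_2$ \emph{not} bisimilar as points of a $\Box$-model unless we also equip them with a further propositional variable, or — simpler — to make the frames such that the successors have no outgoing edges and are pairwise $\Lbox$-equivalent by brute force (finitely many relevant subformulas).

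**Where the difficulty lies.** The main obstacle is arranging reflexivity/transitivity/symmetry/Euclideanness \emph{while} keeping the two models $\Lbox$-indistinguishable: closing the frame under, say, transitivity tends to introduce new accessible worlds that $\Box$ can detect. I expect the resolution to be the standard one: take $\mathfrak M_2$ to have a root seeing two worlds whose only difference from the corresponding two worlds of $\mathfrak M_1$ is a $\mathbf T\leftrightarrow\mathbf B$ (or $\mathbf T\leftrightarrow$-something) swap that is \emph{invisible to $\vDash^+$ and $\vDash^-$ of every $\Lbox$-subformula} because those clauses only track $v^+$ and $v^-$ separately and never ``both at once'', then verify by the routine induction that no $\Lbox$-formula separates the roots, and finally check directly from conditions~\eqref{t1conditionI}--\eqref{f1conditionI} that $\blacktriangle p$ is separated. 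The symmetric and Euclidean cases may require adding a back-edge or a clique among the successors; one then re-runs the induction, using that adding edges among worlds that already agree on all $\Lbox$-formulas preserves $\Lbox$-agreement. So the plan is: (1) fix the models for the ``all frames'' case; (2) observe the $\Lbox$-indistinguishability by induction; (3) patch the frame minimally to land in each of the other four classes, re-checking (2); (4) read off the disagreement on $\blacktriangle p$ from Definition~\ref{def:AFDEsemantics}.
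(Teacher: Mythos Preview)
Your plan has a fatal flaw: you aim to produce two pointed models that are \emph{fully} $\Lbox$-equivalent (agree on $\vDash^+$ and $\vDash^-$ for every $\Lbox$-formula) yet disagree on $\blacktriangle p$. This is impossible. The Belnapian value of $\blacktriangle p$ at a world $w$ is entirely determined by the four bits ``all successors $\vDash^+ p$'', ``some successor $\vDash^+ p$'', ``all successors $\vDash^- p$'', ``some successor $\vDash^- p$'', and these are precisely the $\vDash^+/\vDash^-$ values of $\Box p$ and $\lozenge p$ at $w$. (Check: $(t_1\blacktriangle)$ says the first two bits agree and the last two bits agree; when $(t_1\blacktriangle)$ holds the successor-value set is homogeneous and the four bits pin it down, whence $(t_2\blacktriangle)$; and each of $(f_1\blacktriangle)$, $(f_2\blacktriangle)$, $(f_3\blacktriangle)$ is a Boolean combination of the four bits.) Hence any two pointed models that agree on $\Box p$ and $\lozenge p$ automatically agree on $\blacktriangle p$. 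Your repeated dead ends (``$\Box\neg p$ distinguishes'', etc.) are exactly this obstruction surfacing; no amount of patching the frame will remove it.

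The paper's argument is therefore genuinely different and does \emph{not} seek $\Lbox$-equivalence. It takes $\mathfrak{M}$ to be a single reflexive world $w_0$ with $p$ true-and-not-false, and $\mathfrak{M}'$ to be a two-world clique with $p$ true-and-not-false at $w'_0$ and a gap at $w'_1$. These are \emph{not} $\Lbox$-equivalent ($\Box p$ is $\mathbf{T}$ at $w_0$ but $\mathbf{N}$ at $w'_0$). What the paper proves instead is a \emph{one-directional} transfer of classical values: for every $\phi\in\Lbox$, if $\phi$ has value $\mathbf{T}$ (resp.\ $\mathbf{F}$) at $w'_0$ then it has the same value at $w_0$. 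This suffices, since any $\phi$ defining $\blacktriangle p$ would have to be $\mathbf{F}$ at $w'_0$ and $\mathbf{T}$ at $w_0$. The induction uses, as an auxiliary fact, that no $\Lbox$-formula can take value $\mathbf{B}$ anywhere in $\mathfrak{M}'$ (one world is all-$\mathbf{T}$, the other all-$\mathbf{N}$), which kills the mixed subcases for $\wedge$ and $\lozenge$. Finally, both frames are already reflexive, symmetric, transitive, and Euclidean, so a single pair of models handles all five classes at once; the per-class patching you anticipate is unnecessary.
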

\begin{proof}
Consider the models in fig.~\ref{fig:S4counterexample}.
\begin{figure}
\centering
\begin{tikzpicture}[modal,node distance=0.5cm,world/.append style={minimum
size=1cm}]
\node[world] (w0) [label=below:{$w_0$}] {$p^+$};
\node[] [left=of w0] {$\mathfrak{M}$:};
\path[->] (w0) edge[reflexive] (w0);
\end{tikzpicture}
\hfil
\begin{tikzpicture}[modal,node distance=0.5cm,world/.append style={minimum
size=1cm}]
\node[world] (w0) [label=below:{$w'_0$}] {$p^+$};
\node[world] (w1) [right=of w0] [label=below:{$w'_1$}] {$\xcancel{p}$};
\node[] [left=of w] {$\mathfrak{M}'$:};
\path[->] (w0) edge[reflexive] (w0);
\path[->] (w1) edge[reflexive] (w1);
\path[<->] (w0) edge (w1);
\end{tikzpicture}
\caption{All variables in both models have the same values exemplified by $p$.}
\label{fig:S4counterexample}
\end{figure}
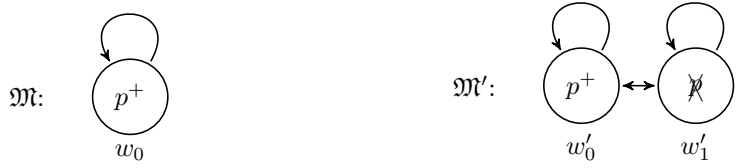
Clearly, $\mathfrak{M},w_0\vDash^+\blacktriangle p$ and $\mathfrak{M},w_0\nvDash^-\blacktriangle p$ but $\mathfrak{M}',w'_0\nvDash^+\blacktriangle p$ and $\mathfrak{M}',w'_0\vDash^-\blacktriangle p$. It now suffices to show for any $\phi\in\Lbox$ that
\begin{enumerate}
\item if $\mathfrak{M}',w'_0\nvDash^+\phi$ and $\mathfrak{M}',w'_0\vDash^-\phi$, then $\mathfrak{M},w_0\nvDash^+\phi$ and $\mathfrak{M},w_0\vDash^-\phi$; and
\item if $\mathfrak{M}',w'_0\vDash^+\phi$ and $\mathfrak{M}',w'_0\nvDash^-\phi$, then $\mathfrak{M},w_0\vDash^+\phi$ and $\mathfrak{M},w_0\nvDash^-\phi$.
\end{enumerate}

We proceed by induction on $\phi$. The basis case of variables is trivial since their valuations at $w_0$ and $w'_0$ are the same. We only show the most instructive cases of $\phi=\psi\wedge\psi'$ and $\phi=\lozenge\psi$.

\fbox{$\phi=\psi\wedge\psi'$}

For (1), if $\mathfrak{M}',w'_0\nvDash^+\psi\wedge\psi'$ and $\mathfrak{M}',w'_0\vDash^-\psi\wedge\psi'$, then
\begin{enumerate}
\item[(a)] $\mathfrak{M}',w'_0\nvDash^+\psi$ and $\mathfrak{M}',w'_0\vDash^-\psi$, or
\item[(b)] $\mathfrak{M}',w'_0\nvDash^+\psi'$ and $\mathfrak{M}',w'_0\vDash^-\psi'$, or
\item[(c)] w.l.o.g. $\mathfrak{M}',w'_0\vDash^+\psi$ and $\mathfrak{M}',w'_0\vDash^-\psi$ but $\mathfrak{M}',w'_0\nvDash^+\psi'$ and $\mathfrak{M}',w'_0\nvDash^-\psi'$.
\end{enumerate}

Cases (a) and (b) hold by the induction hypothesis. For (c), one can show by induction that there is no $\phi\in\Lbox$ s.t.\ $\mathfrak{M}',w'_0\vDash^+\phi$ and $\mathfrak{M}',w'_0\vDash^-\phi$. The basis case of propositional variables holds by the construction of the model, and the cases of propositional connectives can be shown by a~straightforward application of the induction hypothesis. Finally, one can see that there is no $\chi\in\Lbox$ s.t.\ $\mathfrak{M}',w'_1\vDash^+\chi$ and $\mathfrak{M}',w'_1\vDash^-\chi$. Thus, it holds that there is no $\phi=\Box\chi$, nor $\phi=\lozenge\chi$ s.t.\ $\mathfrak{M}',w'_0\vDash^+\phi$ and $\mathfrak{M}',w'_0\vDash^-\phi$.

For (2), recall that if $\mathfrak{M}',w'_0\vDash^+\psi\wedge\psi'$ and $\mathfrak{M}',w'_0\nvDash^-\psi\wedge\psi'$, then $\mathfrak{M}',w'_0\vDash^+\psi$ and $\mathfrak{M}',w'_0\nvDash^-\psi$ as well as $\mathfrak{M}',w'_0\vDash^+\psi'$ and $\mathfrak{M}',w'_0\nvDash^-\psi'$. Hence, by the induction hypothesis, $\mathfrak{M},w_0\vDash^+\psi$ and $\mathfrak{M},w_0\nvDash^-\psi$ as well as $\mathfrak{M},w_0\vDash^+\psi'$ and $\mathfrak{M},w_0\nvDash^-\psi'$. Thus, $\mathfrak{M}',w'_0\vDash^+\psi\wedge\psi'$ and $\mathfrak{M}',w'_0\nvDash^-\psi\wedge\psi'$, as required.

\fbox{$\phi=\lozenge\psi$}

Observe first, that since $\forall w',w''\!\in\!\mathfrak{M}':w'Rw''$, it holds that
\begin{itemize}
\item $\mathfrak{M}',w'\vDash^+\lozenge\psi$ iff $\mathfrak{M}',w''\vDash^+\lozenge\psi$, and
\item $\mathfrak{M}',w'\vDash^-\lozenge\psi$ iff $\mathfrak{M}',w''\vDash^-\lozenge\psi$
\end{itemize}
for any $w',w''\in\mathfrak{M}'$ and any $\lozenge\psi\in\Lbox$.

Now, consider (1). If $\mathfrak{M}',w'_0\nvDash^+\lozenge\psi$ and $\mathfrak{M}',w'_0\vDash^-\lozenge\psi$, then $\mathfrak{M}',w'\nvDash^+\psi$ and $\mathfrak{M}',w'\vDash^-\psi$ for any $w'\in\{w'_0,w'_1\}$. But then, $\mathfrak{M},w_0\nvDash^+\psi$ and $\mathfrak{M},w_0\vDash^-\psi$ by the induction hypothesis. Hence, $\mathfrak{M},w_0\nvDash^+\lozenge\psi$ and $\mathfrak{M},w_0\vDash^-\lozenge\psi$, as required.

For (2), let $\mathfrak{M}',w'_0\vDash^+\lozenge\psi$ and $\mathfrak{M}',w'_0\nvDash^-\lozenge\psi$. We have four options:
\begin{enumerate}
\item[(a)] $\mathfrak{M}',w'_0\vDash^+\psi$ and $\mathfrak{M}',w'_0\nvDash^-\psi$, or
\item[(b)] $\mathfrak{M}',w'_1\vDash^+\psi$ and $\mathfrak{M}',w'_1\nvDash^-\psi$, or
\item[(c)] $\mathfrak{M}',w'_1\vDash^+\psi$, $\mathfrak{M}',w'_1\vDash^-\psi$, $\mathfrak{M}',w'_0\nvDash^+\psi$ and $\mathfrak{M}',w'_0\nvDash^-\psi$, or
\item[(d)] $\mathfrak{M}',w'_0\vDash^+\psi$, $\mathfrak{M}',w'_0\vDash^-\psi$, $\mathfrak{M}',w'_1\nvDash^+\psi$ and $\mathfrak{M}',w'_1\nvDash^-\psi$.
\end{enumerate}

The (a) case holds by the induction hypothesis. Case (d) holds trivially since there is no $\phi\in\Lbox$ s.t.\ $\mathfrak{M}',w'_0\vDash^+\phi$ and $\mathfrak{M}',w'_0\vDash^-\phi$ as we have just shown. Case (c) holds trivially as well because a~similar inductive argument demonstrates that there is no $\phi\in\Lbox$ s.t.\ $\mathfrak{M}',w'_1\vDash^+\phi$ and $\mathfrak{M}',w'_1\vDash^-\phi$.

Finally, we reduce (b) to (a) by proving that for any $\psi\in\Lbox$, it holds that (i) if $\mathfrak{M}',w'_1\vDash^+\psi$ and $\mathfrak{M}',w'_1\nvDash^-\psi$, then $\mathfrak{M}',w'_0\vDash^+\psi$ and $\mathfrak{M}',w'_0\nvDash^-\psi$ as well, and that (ii) if $\mathfrak{M}',w'_1\nvDash^+\psi$ and $\mathfrak{M}',w'_1\vDash^-\psi$, then $\mathfrak{M}',w'_0\nvDash^+\psi$ and $\mathfrak{M}',w'_0\vDash^-\psi$. We proceed by induction on $\psi$.

The basis case of a~propositional variable holds trivially. The propositional cases are also straightforward. Now, if $\psi=\lozenge\chi$, we use the observation above to obtain that (i) and (ii) hold as well.

The result follows.
\end{proof}

Recall that in the classical non-contingency logic $p\wedge\triangle p$ \emph{defines} $\Box p$ over the class of reflexive frames. We can show that this is not the case for $\AFDE$ and $\mathbf{K}_{\mathbf{FDE}}$.
\begin{theorem}\label{theorem:boxnotdefinable}
There is no $\phi\in\Ltriangle$ that defines $\Box p$ over the classes of all, reflexive, symmetric, transitive, and Euclidean frames.
\end{theorem}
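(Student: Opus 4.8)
The strategy mirrors that of Theorem~\ref{theorem:trianglenotdefinable}: I would exhibit two pointed models that agree on the Belnapian value of every $\Ltriangle$ formula but disagree on $\Box p$. First I would fix a candidate pair of models on (say) a single reflexive point versus something slightly larger. A natural choice: let $\mathfrak{N}$ be the one-point reflexive model with $p$ gappy (i.e.\ $w:\xcancel{p}$), and let $\mathfrak{N}'$ be the one-point reflexive model with $p$ true and not-false. Then $\mathfrak{N},w\nvDash^+\Box p$ and $\mathfrak{N}',w'\vDash^+\Box p$, so $\Box p$ separates them. The crux is then to show that \emph{no} $\Ltriangle$ formula separates them --- that for every $\phi\in\Ltriangle$, $\mathfrak{N},w$ and $\mathfrak{N}',w'$ assign $\phi$ the same pair of support-of-truth / support-of-falsity verdicts. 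Wait --- that is false, since $p$ itself is treated differently. So the honest approach is: pick the models so that they agree on \emph{all} variables but differ on $\Box p$; since in a one-point model $\blacktriangle\phi$ reduces (via conditions \eqref{t1conditionI}--\eqref{fconditionS}) to a condition on $\phi$ at the single reflexive successor, a one-point model will never separate $\Box p$ from its $\blacktriangle$-counterpart. The right pair is therefore a one-point reflexive model $\mathfrak{M}$ (all variables, in particular $p$, gappy) against a two-point model $\mathfrak{M}'$ where $w_0'$ sees a reflexive $w_0'$ and a reflexive $w_1'$, both with $p$ gappy but $w_1'$ carrying, say, some auxiliary structure --- actually the cleanest is to reuse exactly fig.~\ref{fig:S4counterexample}'s $\mathfrak{M}'$ but reflexive-and-symmetric-and-transitive, with $p$ gappy at $w_0'$ and true-not-false at $w_1'$: then $\mathfrak{M}',w_0'\nvDash^+\Box p$ (since $w_1'$ isn't a problem but $w_0'$ itself is gappy, so $\Box p$ fails) --- hmm, I need $\Box p$ to differ between the two models.

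Let me restate cleanly. Take $\mathfrak{M}$ the one-point reflexive model with $p$ (hence every variable) true and not-false, so $\mathfrak{M},w\vDash^+\Box p$ and $\mathfrak{M},w\nvDash^-\Box p$. Take $\mathfrak{M}'$ the two-point model on $\{w_0',w_1'\}$ with $R$ the full (reflexive, symmetric, transitive, Euclidean) relation, $p$ true-and-not-false at both points --- no, then $\Box p$ holds at $w_0'$ too. So instead make $p$ \emph{glutty} ($p^\pm$) at $w_1'$ and true-not-false at $w_0'$: then $\mathfrak{M}',w_0'\vDash^+\Box p$ still but $\mathfrak{M}',w_0'\vDash^-\Box p$ via \eqref{fconditionBox}, whereas $\mathfrak{M},w\nvDash^-\Box p$. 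So $\Box p$ is \emph{false} at $w_0'$ in $\mathfrak{M}'$ but \emph{not-false} at $w$ in $\mathfrak{M}$ --- a genuine Belnapian difference. Now the key lemma: for every $\phi\in\Ltriangle$, $\mathfrak{M},w$ and $\mathfrak{M}',w_0'$ give $\phi$ the same support-of-truth and support-of-falsity values, i.e.\ $\mathfrak{M},w\vDash^+\phi\iff\mathfrak{M}',w_0'\vDash^+\phi$ and likewise for $\vDash^-$. I would prove this by induction on $\phi$, carrying along the auxiliary fact that $\mathfrak{M}',w_1'$ never has $\phi$ with \emph{both} a truth-value-gap, i.e.\ tracking the values at $w_1'$ too. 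Variables: true-not-false at $w$, $w_0'$, and glutty only for $p$ at $w_1'$, so the base case holds for the $w$/$w_0'$ comparison. Negation, conjunction, disjunction: routine via the contraposition-style case analysis already used in Theorem~\ref{theorem:trianglenotdefinable}. The modal case $\phi=\blacktriangle\psi$: at $w$ the only successor is $w$ itself, so $\mathfrak{M},w\vDash^+\blacktriangle\psi$ iff $\psi$ is homogeneous-valued at $w$ iff ($\psi$ true-not-false at $w$) or ($\psi$ false-not-true at $w$); at $w_0'$ the successors are $w_0'$ and $w_1'$, and $\mathfrak{M},w_0'\vDash^+\blacktriangle\psi$ iff $\psi$ has the same value at $w_0'$ and $w_1'$ and that value isn't a gap. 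The IH gives $\psi$ the same value at $w$ and $w_0'$; the extra bookkeeping about $w_1'$ is what forces the $\blacktriangle\psi$ verdicts to agree.

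So the real content, and the step I expect to be the main obstacle, is designing the induction hypothesis strong enough to survive the $\blacktriangle$ step: one must control not only the value of $\psi$ at $w_0'$ (matched to $w$ by IH) but also its value at $w_1'$, and show that the relationship between the $w_0'$-value and the $w_1'$-value is always ``compatible'' --- concretely, that $\psi$ never receives two genuinely different values at $w_0'$ versus $w_1'$ in a way that would make $\blacktriangle\psi$ false at $w_0'$ while it is true at $w$, and symmetrically. This requires a simultaneous induction establishing something like: for all $\psi\in\Ltriangle$, the pair of verdicts of $\psi$ at $w_1'$ equals the pair at $w_0'$ (equivalently at $w$), \emph{except} that $p$ and possibly certain formulas built over $p$ may be glutty at $w_1'$ while merely true at $w_0'$. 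One has to check that gluttiness at $w_1'$ propagates only upward through $\neg,\wedge,\vee$ in a controlled way and that when a subformula becomes glutty at $w_1'$, the $\blacktriangle$-value at $w_0'$ stays aligned with the $\blacktriangle$-value at $w$ --- this is where the argument that ``there is no $\Lbox$-formula both true and false'' from the previous proof has an $\Ltriangle$-analogue that must be re-established (a glutty $\psi$ at $w_1'$ makes $\blacktriangle\psi$ \emph{false} at $w_0'$, hence not-true, matching nothing --- so actually one should choose the separating models so that this asymmetry never bites, perhaps by making $p$ glutty at $w_1'$ but arranging $\Box p$ to be the unique formula sensitive to it). Once the separation lemma is in hand, the theorem is immediate: if some $\phi\in\Ltriangle$ defined $\Box p$ over any of the listed frame classes, then since $\mathfrak{M}$ and $\mathfrak{M}'$ both lie in each of those classes (they are reflexive, and $\mathfrak{M}'$ is also symmetric, transitive, and Euclidean), $\phi$ would have to agree in value with $\Box p$ at both $w$ and $w_0'$, contradicting that $\phi$ takes the same value at $w$ and $w_0'$ while $\Box p$ does not.
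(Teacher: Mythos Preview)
Your two-model separation lemma is false, and the counterexample is immediate: take your $\mathfrak{M}$ (one reflexive point $w$ with $p^+$) and $\mathfrak{M}'$ (two points $w_0',w_1'$ with the full relation, $p^+$ at $w_0'$, $p^\pm$ at $w_1'$). At $w$ the only successor is $w$, where $p$ is true and not-false, so $\mathfrak{M},w\vDash^+\blacktriangle p$ and $\mathfrak{M},w\nvDash^-\blacktriangle p$. At $w_0'$ the successors are $w_0'$ and $w_1'$; since $p$ is false at $w_1'$ but not-false at $w_0'$, condition~\eqref{f2conditionI} holds and~\eqref{t1conditionI} fails, so $\mathfrak{M}',w_0'\nvDash^+\blacktriangle p$ and $\mathfrak{M}',w_0'\vDash^-\blacktriangle p$. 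Thus $\blacktriangle p$ already separates $\langle\mathfrak{M},w\rangle$ from $\langle\mathfrak{M}',w_0'\rangle$, and your intended induction collapses at the very first modal step. You saw this coming (``a glutty $\psi$ at $w_1'$ makes $\blacktriangle\psi$ false at $w_0'$, hence not-true, matching nothing''), but the proposal never repairs it; no choice of ``auxiliary bookkeeping'' at $w_1'$ will save a claim that is outright refuted by $\blacktriangle p$.

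The paper avoids the two-model comparison entirely. It uses only your $\mathfrak{M}'$ (call it $\mathfrak{M}$, with $w_0,w_1$), observes that $\Box p$ is \emph{both true and false} at $w_0$, and then proves by a short induction on $\phi\in\Ltriangle$ that no $\Ltriangle$ formula is both true and false at $w_0$. The base case holds since $p$ is true and not-false at $w_0$; the propositional steps are routine; and for $\phi=\blacktriangle\psi$, if $\blacktriangle\psi$ were both true and false at $w_0$ then $\psi$ would be both true and false at every successor, in particular at $w_0$ itself by reflexivity, contradicting the induction hypothesis. Since the single model is reflexive, symmetric, transitive, and Euclidean, this disposes of all the listed frame classes at once. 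The moral: rather than trying to match \emph{all} $\Ltriangle$-values across two models, it suffices to exhibit one pointed model where $\Box p$ attains a Belnapian value that is provably out of reach for every $\Ltriangle$ formula.
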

\begin{proof}
Consider the model in fig.~\ref{fig:S5counterexample}.
\begin{figure}
\centering
\begin{tikzpicture}[modal,node distance=0.5cm,world/.append style={minimum
size=1cm}]
\node[world] (w0) [label=below:{$w_0$}] {$p^+$};
\node[world] (w1) [right=of w0] [label=below:{$w_1$}] {$p^\pm$};
\node[] [left=of w0] {$\mathfrak{M}$:};
\path[->] (w0) edge[reflexive] (w0);
\path[->] (w1) edge[reflexive] (w1);
\path[<->] (w0) edge (w1);
\end{tikzpicture}
\caption{All variables have the same values exemplified by $p$.}
\label{fig:S5counterexample}
\end{figure}
It is clear that $\mathfrak{M},w_0\vDash^+\Box p$ and $\mathfrak{M},w_0\vDash^-\Box p$. However, we can show that there is no $\phi\in\Ltriangle$ s.t.\ $\mathfrak{M},w_0\vDash^+\phi$ and $\mathfrak{M},w_0\vDash^-\phi$.

We proceed by induction on $\phi\in\Ltriangle$. Clearly, $\mathfrak{M},w_0\nvDash^-p$. Let us consider the remaining cases. We will reason for a~contradiction.

\fbox{$\phi=\neg\psi$}

Assume that $\mathfrak{M},w_0\vDash^+\neg\psi$ and $\mathfrak{M},w_0\vDash^-\neg\psi$. But then, $\mathfrak{M},w_0\vDash^+\psi$ and $\mathfrak{M},w_0\vDash^-\psi$. A~contradiction.

\fbox{$\phi=\psi\wedge\psi'$}

Assume that $\mathfrak{M},w_0\vDash^+\neg\psi$ and $\mathfrak{M},w_0\vDash^-\neg\psi$. But then, either $\mathfrak{M},w_0\vDash^+\psi$ and $\mathfrak{M},w_0\vDash^-\psi$, or $\mathfrak{M},w_0\vDash^+\psi'$ and $\mathfrak{M},w_0\vDash^-\psi'$. A~contradiction.

The case of $\phi=\psi\vee\psi'$ can be tackled in the same manner.

\fbox{$\phi=\blacktriangle\psi$}

Assume that $\mathfrak{M},w_0\vDash^+\blacktriangle\psi$ and $\mathfrak{M},w_0\vDash^-\blacktriangle\psi$. Then, we have $\mathfrak{M},w_0\vDash^+\psi$, $\mathfrak{M},w_0\vDash^-\psi$, $\mathfrak{M},w_1\vDash^+\psi$, and $\mathfrak{M},w_1\vDash^-\psi$. But by the induction hypothesis, there is no $\chi\in\Ltriangle$ s.t.\ it is less complex than $\blacktriangle\psi$, $\mathfrak{M},w_0\vDash^+\chi$, and $\mathfrak{M},w_0\vDash^-\chi$. A~contradiction.
\end{proof}

We would like to once again draw our readers' attention to Theorem~\ref{theorem:trianglenotdefinable}. This result shows us that in contrast to the classical case, where ‘$\phi$ is true in all accessible states or false in all accessible states’ is an equivalent reformulation of ‘the value of $\phi$ is the same in all accessible states’, these two readings in $\mathbf{FDE}$ are different. Not only that, none of these two interpretations can be reduced to the other. In other words, we cannot straightforwardly model the search of inconsistencies among the accessible sources of information in $\Lbox$ if we assume that a~source can give contradictory or incomplete information.

A natural question arises then --- whether it is reasonable to define $\blacktriangle\phi$ using that other reading, and whether $\blacktriangle\phi$ thus defined is going to have any interesting properties. We leave this for future research.
\section{Frame definability\label{sec:framedefinability}}
It is known that the classical language containing $\triangle$ is strictly weaker than the one with $\Box$ instead. As we have previously mentioned, all functional frames are equivalent w.r.t.\ classical formulas~\cite{Zolin1999,FanvanDitmarsch2015}. Thus, many natural first-order properties of frames such as seriality, transitivity, Euclidieanness, etc. are not definable using~$\triangle$.

In this section, we are going to show that several frame properties which are \emph{not definable} in the classical case are definable with $\AFDE$ sequents.

First of all, observe that in contrast to the classical case, we can distinguish between different partial-functional frames. Consider the frames in fig.~\ref{fig:functionaldistinguishable}.

\begin{figure}
\centering
\begin{tikzpicture}[modal,node distance=0.5cm,world/.append style={minimum
size=1cm}]
\node[world] (w0) [label=below:{$w_0$}] {};
\node[] [left=of w] {$\mathfrak{F}$:};
\end{tikzpicture}
\hfil
\begin{tikzpicture}[modal,node distance=0.5cm,world/.append style={minimum
size=1cm}]
\node[world] (w0) [label=below:{$w'_0$}] {};
\node[] [left=of w] {$\mathfrak{F}'$:};
\path[->] (w0) edge[reflexive] (w0);
\end{tikzpicture}
\caption{Two distinguishable partial-functional frames.}
\label{fig:functionaldistinguishable}
\end{figure}
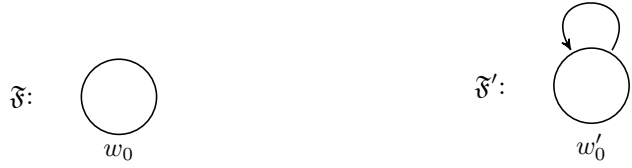

It is evident that $\blacktriangle p$ is valid on $\mathfrak{F}$ but not on $\mathfrak{F}'$. Indeed, since no state is accessible from $w_0$, $\mathfrak{M},w_0\vDash^+\blacktriangle p$ and $\mathfrak{M},w_0\nvDash^-\blacktriangle p$ for any $\mathfrak{M}$ on $\mathfrak{F}$. However, if we set $w'_0\notin v'^+(p)$ and $w'_0\notin v'^-(p)$ and let $\mathfrak{M}'=\langle\mathfrak{F}',v'^+,v'^-\rangle$, we will have that $\mathfrak{M}',w'_0\nvDash^+\blacktriangle p$ and $\mathfrak{M}',w'_0\nvDash^-\blacktriangle p$.
\begin{remark}
Observe that the possibility of distinguishing partial-functional frames is due to the fact that $\blacktriangle p$ is not necessarily true (and, a~fortiori, not necessarily true and not-false) at $w$ if $|R(w)|\leqslant1$. This provides another argument (albeit, a~technical one) in favour of our chosen semantics.

Indeed, had we chosen the support of truth condition of $\blacktriangle\phi$ at $w$ as ‘$\phi$ has the same Belnapian value in all successors of $w$’, then $\blacktriangle\phi$ would have been always true in all states of a~partial-functional frame for any $\phi$.
\end{remark}
This gives us an opportunity to provide some correspondence theory. The notion of frame definability will be a~slight modification of that in the classical logic.
\begin{definition}[Frame definability]
Let $\Sigma$ be a~set of ‘formula-formula’ sequents and $\mathbb{F}$ be a~class of frames. We say that $\Sigma$ \emph{defines} $\mathbb{F}$ iff for any frame $\mathfrak{F}$, $\mathfrak{F}\in\mathbb{F}$ iff all sequents from $\Sigma$ are valid on $\mathfrak{F}$. A~class of frames is definable in $\AFDE$ iff there is a~set of sequents that defines it.
\end{definition}

Below, we give notation for some classes of frames which are not definable with $\triangle$-formulas in the classical non-contingency logic. Of particular interest are frames $\langle W,R\rangle$ where $R$ is reflexive, an equivalence relation, or a~non-strict preorder (i.e., reflexive and transitive).
\begin{convention}[Classes of frames]\label{conv:classesofframes}
Let $\mathfrak{F}=\langle W,R\rangle$ be a~frame. We will use the following notation to designate different classes of frames.
\begin{center}
\begin{tabular}{c|c}
\textbf{notation}&\textbf{class of frames}\\\hline
$\mathbf{T}$&$R$ is reflexive\\
$\mathbf{S4}$&$R$ is transitive and reflexive\\
$\mathbf{S5}$&$R$ is an equivalence relation\\
$\mathbf{F}$&$R$ is partial-functional: $\forall x:\exists y,z(R(x,y)\&R(x,z))\Rightarrow y=z$\\
$\mathbf{Ver}$&$R=\varnothing$\\
$\mathbf{1}$&$R$ is coreflexive: $\forall x,y:R(x,y)\Rightarrow x=y$
\end{tabular}    
\end{center}
\end{convention}

The following statement is the main result of this section.
\begin{theorem}\label{AFDEcorrespondence}
All classes of frames from convention~\ref{conv:classesofframes} are definable in $\AFDE$.
\end{theorem}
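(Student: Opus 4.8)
The plan is to handle each of the six frame classes from Convention~\ref{conv:classesofframes} separately, exhibiting for each a finite set of sequents and proving the two directions of the biconditional in the definition of frame definability. For a given candidate $\Sigma$ and class $\mathbb{F}$, one must show (soundness) that every $\mathfrak{F}\in\mathbb{F}$ validates all sequents in $\Sigma$, and (completeness) that any $\mathfrak{F}\notin\mathbb{F}$ has a model falsifying some sequent of $\Sigma$. The easy cases first: for $\mathbf{Ver}$ ($R=\varnothing$), note that if $R(w)=\varnothing$ then $\blacktriangle\phi$ is true (and not-false) at $w$ for every $\phi$, while if some state has a successor one can put a gap there and break it; so something like $q\vdash\blacktriangle p$ should work, and dually. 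For $\mathbf{F}$ (partial-functional), the defining feature is that $|R(w)|\le 1$ always, which by the semantics forces conditions \eqref{f1conditionI}, \eqref{f2conditionI}, \eqref{fconditionS} to fail, so $\blacktriangle\phi$ can never be false; hence a sequent asserting $\blacktriangle p$ is not-false, e.g.\ $\blacktriangle p\vdash\blacktriangle p\wedge\neg\blacktriangle\blacktriangle p$-style combinations — more cleanly, $\neg\blacktriangle p\vdash q$ — captures it, and conversely a state with two distinct successors disagreeing on $p$ falsifies it. For $\mathbf{1}$ (coreflexive), each state sees only itself (or nothing), so $\blacktriangle\phi$ at $w$ reflects the value of $\phi$ at $w$ itself when $wRw$; this should let $\phi\dashv\vdash\blacktriangle\phi$-type sequents (both $\phi\vdash\blacktriangle\phi$ and $\blacktriangle\phi\vdash\phi$, modulo the empty-successor case) do the job.

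The substantive cases are $\mathbf{T}$, $\mathbf{S4}$, $\mathbf{S5}$. For reflexivity, the key observation is that if $wRw$, then $\blacktriangle\phi$ being true at $w$ forces $\phi$ to be true-or-false at $w$ (via \eqref{t2conditionI}) and moreover forces $\phi$ not to be both (via \eqref{t1conditionI} applied with $w_1=w_2=w$, together with the fact that differing truth values are excluded) — so $\blacktriangle p$ behaves like a classicality operator at $w$. I expect the defining sequents for $\mathbf{T}$ to be along the lines of $\blacktriangle p\vdash p\vee\neg p$ together with $p\wedge\neg p\vdash\blacktriangledown p$ (i.e.\ $\neg\blacktriangle p$), or equivalently $\blacktriangle p\wedge p\wedge\neg p\vdash q$; one checks that on a reflexive frame these hold, and that a non-reflexive frame — pick $w_0$ with $w_0\not R w_0$ — can be equipped with a valuation making $\blacktriangle p$ true while $p$ is gappy or glutty at $w_0$, violating the sequent. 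For $\mathbf{S4}$ one adds transitivity; the natural transitivity-expressing sequent in a non-contingency language is something like $\blacktriangle p\vdash\blacktriangle\blacktriangle p$ (and its falsity-side analogue), exploiting that under transitivity the successors of successors are already successors, so uniformity of $\phi$'s value propagates; for completeness one takes a reflexive-but-not-transitive frame with $w_0Rw_1Rw_2$ but not $w_0Rw_2$ and engineers $\phi$ uniform on $R(w_1)$ but not on $R(w_0)\cup R(w_1)$. The $\mathbf{S5}$ case combines reflexivity with symmetry/Euclideanness; symmetry-type sequents such as $p\vdash\blacktriangle\blacktriangledown p$ or the $\mathbf{5}$-axiom analogue $\blacktriangledown p\vdash\blacktriangle\blacktriangledown p$ adapted to the four-valued setting should capture the equivalence-relation condition when added to the $\mathbf{S4}$ set.

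In executing this, the main obstacle will be the completeness (right-to-left) direction for $\mathbf{S4}$ and $\mathbf{S5}$: given a frame failing transitivity (or symmetry), one must build a concrete model whose valuation witnesses the failure of the chosen sequent, and because $\blacktriangle$ is sensitive to \emph{all four} Belnapian values rather than just truth, the countermodel construction has more cases than in the classical setting — one has to be careful that the "disagreement" witnessing falsity of $\blacktriangle\phi$ is of the right kind (differing on $\vDash^+$, differing on $\vDash^-$, or one glutty and one gappy, per \eqref{f1conditionI}--\eqref{fconditionS}) and that this does not accidentally get repaired by other successors. A secondary subtlety is verifying, for each proposed sequent, that it is genuinely falsifiable off the class and genuinely valid on it using the full support-of-truth / support-of-falsity semantics of Definition~\ref{def:AFDEsemantics} rather than a classical shortcut; Theorem~\ref{theorem:AFDEcontraposition} helps here, since it lets us check validity via non-falsity preservation whenever that is more convenient, effectively halving the case analysis. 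I would organize the write-up as a sequence of lemmas, one per frame class, each stating the defining set $\Sigma$ and giving the soundness check (usually a short semantic argument) followed by the countermodel for completeness, and then assemble Theorem~\ref{AFDEcorrespondence} as their conjunction.
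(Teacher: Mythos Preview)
Your overall structure matches the paper's: handle the six classes one at a time, exhibiting a defining set of sequents and checking both directions. Your choices for $\mathbf{T}$, $\mathbf{S4}$, $\mathbf{S5}$ are essentially those of the paper (the paper uses $\blacktriangle(p\vee\neg p)\vdash p\vee\neg p$ for reflexivity rather than your $\blacktriangle p\vdash p\vee\neg p$, but your version works just as well; for $\mathbf{S5}$ the paper uses $\blacktriangledown p\vdash\blacktriangle\blacktriangle p$ together with the reflexivity sequent, which is equivalent to your $\blacktriangledown p\vdash\blacktriangle\blacktriangledown p$ since $\blacktriangle\chi$ and $\blacktriangle\neg\chi$ are interderivable). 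Your $\mathbf{Ver}$ and $\mathbf{1}$ sketches are vague but pointing in the right direction; the paper uses $\blacktriangle p$ for $\mathbf{Ver}$ and $p\vee\neg p\vdash\blacktriangle p$ for $\mathbf{1}$.

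There is, however, a genuine error in your treatment of $\mathbf{F}$. You claim that $|R(w)|\le 1$ forces all three falsity conditions \eqref{f1conditionI}, \eqref{f2conditionI}, \eqref{fconditionS} to fail, so that $\blacktriangle\phi$ is never false on a partial-functional frame. This is wrong for \eqref{fconditionS}: the states $w_1,w_2$ appearing there are existentially quantified with no requirement that they be distinct, so a \emph{single} successor $w'$ with $\mathfrak{M},w'\vDash^+\phi$ and $\mathfrak{M},w'\vDash^-\phi$ (a glut) already witnesses \eqref{fconditionS}. Concretely, on the one-point reflexive frame with $p$ both true and false and $q$ gappy, $\neg\blacktriangle p$ is true while $q$ is not, so your proposed $\neg\blacktriangle p\vdash q$ is invalid on a partial-functional frame and cannot define $\mathbf{F}$. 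The paper's defining sequent is $\blacktriangledown p\vdash\blacktriangle p$: on a partial-functional frame, if $\blacktriangledown p$ is true then (since \eqref{f1conditionI} and \eqref{f2conditionI} do require distinct successors) it must be via \eqref{fconditionS}, forcing the unique successor to be a glut on $p$, which then makes $\blacktriangle p$ true as well; conversely, two distinct successors with opposite classical values refute the sequent. You will need to replace your $\mathbf{F}$ argument along these lines.
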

\begin{proof}
\fbox{$\mathbf{T}$ frames}

We show that $\mathfrak{F}\in\mathbf{T}$ iff $\blacktriangle(p\vee\neg p)\vdash p\vee\neg p$ is valid on $\mathfrak{F}$.

Let $\mathfrak{F}$ be reflexive and $\mathfrak{M}$ be a~model on $\mathfrak{F}$ such that $\mathfrak{M},w\vDash^+\blacktriangle(p\vee\neg p)$. Then either $\mathfrak{M},w'\vDash^+p\vee\neg p$ and $\mathfrak{M},w'\nvDash^-p\vee\neg p$ in every $w'\in R(w)$ or $\mathfrak{M},w'\vDash^+p\vee\neg p$ and $\mathfrak{M},w'\vDash^-p\vee\neg p$ for it is never the case that $\mathfrak{M},w'\nvDash^+p\vee\neg p$ and $\mathfrak{M},w'\vDash^-p\vee\neg p$. In any case, since $w\in R(w)$, we have that $\mathfrak{M},w\vDash^+p\vee\neg p$, as required.

For the converse, let $\mathfrak{F}$ be non-reflexive, and further, $w\notin R(w)$. We define $v^+$ and $v^-$ as follows: $w\notin v^+(p)$, $w\notin v^-(p)$, and $w'\in v^+(p)$, $w'\notin v^-(p)$ for every $w'\in R(w)$. Now, it is clear that $\mathfrak{M},w\nvDash^+p\vee\neg p$. However, we have $\mathfrak{M},w\vDash^+\blacktriangle(p\vee\neg p)$ both when $R(w)=\varnothing$ and when $R(w)\neq\varnothing$.

\fbox{$\mathbf{S4}$ frames}

Next, we show that $\mathfrak{F}\in\mathbf{S4}$ iff $\blacktriangle p\vdash\blacktriangle\blacktriangle p$ and $\blacktriangle(p\vee\neg p)\vdash p\vee\neg p$ are valid on $\mathfrak{F}$.

Let $\mathfrak{F}$ be transitive and reflexive. Then $\blacktriangle(p\vee\neg p)\vdash p\vee\neg p$ is valid on $\mathfrak{F}$, as we have just shown. It remains to prove that $\blacktriangle p\vdash\blacktriangle\blacktriangle p$ is also valid on $\mathfrak{F}$. Let now $\mathfrak{M}$ be a~model on $\mathfrak{F}$ and let further $\mathfrak{M},w\vDash^+\blacktriangle p$. We show that $\mathfrak{M},w\vDash^+\blacktriangle\blacktriangle p$. Since $R$ is transitive and reflexive, we have three options:
\begin{enumerate}
\item $\mathfrak{M},w'\vDash^+p$ and $\mathfrak{M},w'\nvDash^-p$ for every $w'\in R(w)$, or
\item $\mathfrak{M},w'\nvDash^+p$ and $\mathfrak{M},w'\vDash^-p$ for every $w'\in R(w)$, or
\item $\mathfrak{M},w'\vDash^+p$ and $\mathfrak{M},w'\vDash^-p$ for every $w'\in R(w)$.
\end{enumerate}

In the cases (1) and (2), $\mathfrak{M},w'\vDash^+\blacktriangle p$ and $\mathfrak{M},w'\nvDash^-\blacktriangle p$ in any $w'\in R(w)$ since $R$ is reflexive and transitive. Thus, $\mathfrak{M},w'\vDash^+\blacktriangle\blacktriangle p$ and $\mathfrak{M},w'\nvDash^-\blacktriangle\blacktriangle p$.

In the third case, $\mathfrak{M},w'\vDash^+\blacktriangle p$ and $\mathfrak{M},w'\vDash^-\blacktriangle p$ in any $w'\in R(w)$ since there are no dead-ends. Hence, $\mathfrak{M},w'\vDash^+\blacktriangle\blacktriangle p$ and $\mathfrak{M},w'\vDash^-\blacktriangle\blacktriangle p$.

For the converse, let $\mathfrak{F}\notin\mathbf{S4}$. If $\mathfrak{F}$ is not reflexive, $\blacktriangle(p\vee\neg p)\vdash p\vee\neg p$ is not valid. If $\mathfrak{F}\notin\mathbf{S4}$ but it is reflexive, then it is not transitive.

We let $w_0,w_1,w_2\in\mathfrak{F}$ s.t.\ $w_0Rw_1$ and $w_1Rw_2$ but $w_2\notin R(w_0)$. We define the valuations as follows. $w'\in v^+(p)$ and $w'\notin v^-(p)$ for every $w'\in R(w_0)$; $w''\notin v^+(p)$ $w''\notin v^+(p)$ for every $w''\notin R(w)$. It is clear that $\mathfrak{M},w_0\vDash^+\blacktriangle p$ and $\mathfrak{M},w_0\nvDash^-\blacktriangle p$. However, $\mathfrak{M},w_1\nvDash^+\blacktriangle p$. Now. since $w_0\in R(w_0)$, we have that $\mathfrak{M},w_0\nvDash^+\blacktriangle\blacktriangle p$ and $\mathfrak{M},w_0\vDash^-\blacktriangle\blacktriangle p$, as desired.

\fbox{$\mathbf{S5}$ frames}

Next, we prove that $\mathfrak{F}\in\mathbf{S5}$ iff $\blacktriangledown p\vdash\blacktriangle\blacktriangle p$ and $\blacktriangle(p\vee\neg p)\vdash p\vee\neg p$ are valid on $\mathfrak{F}$.

Assume that $\mathfrak{F}$ is reflexive and Euclidean\footnote{Recall that $R$ is an equivalence relation iff it is reflexive and Euclidean.}. Then $\blacktriangle(p\vee\neg p)\vdash p\vee\neg p$ is valid. Now let $\mathfrak{M}$ be a~model over $\mathfrak{F}$, and let $\mathfrak{M},w_0\vDash^+\blacktriangledown p$. We show that $\mathfrak{M},w_0\vDash^+\blacktriangle\blacktriangle p$. There are three cases:
\begin{enumerate}
\item $\mathfrak{M},w'\vDash^+p$ and $\mathfrak{M},w'\vDash^-p$ at any $w'\in R(w_0)$, or
\item there are $s,t\in R(w_0)$ s.t.\ $\mathfrak{M},s\vDash^+p$ and $\mathfrak{M},t\nvDash^+p$, or
\item there are $s,t\in R(w_0)$ s.t.\ $\mathfrak{M},s\vDash^-p$ and $\mathfrak{M},t\nvDash^-p$.
\end{enumerate}
Note, first of all, that since $R$ is an equivalence relation, it holds that $uRu'$ and $R(u)=R(w_0)$ for any $u,u'\in R(w_0)$.

Now, in the first case, $\mathfrak{M},w'\vDash^+\blacktriangle p$ and $\mathfrak{M},w'\vDash^-\blacktriangle p$ at any $w'\in R(w_0)$. Hence, $\mathfrak{M},w_0\vDash^+\blacktriangle\blacktriangle p$ and $\mathfrak{M},w_0\vDash^-\blacktriangle\blacktriangle p$ as desired.

In the second and in the third cases, $\mathfrak{M},u\nvDash^+\blacktriangle p$ and $\mathfrak{M},u\vDash^-\blacktriangle p$ for any $u\in R(w_0)$. Thus, $\mathfrak{M},w_0\vDash^+\blacktriangle\blacktriangle p$ and $\mathfrak{M},w_0\nvDash^-\blacktriangle\blacktriangle p$.

For the converse, let $\mathfrak{F}\notin\mathbf{S5}$. Again, if $\mathfrak{F}$ is not reflexive, $\blacktriangle(p\vee\neg p)\vdash p\vee\neg p$ will not be valid. So, we consider the case of a~reflexive but non-Euclidean frame $\mathfrak{F}$.

Let $w_0\in\mathfrak{F}$ and also $w_1,w_2\in R(w_0)$ be s.t.\ $w_2\notin R(w_1)$. We set the valuation of $p$ as follows: $w\in v^+(p)$ and $w\notin v^-(p)$ for any $w\in(R(w_0)\setminus R(w_2))\cup(R(w_1)\setminus R(w_2))$ and $w'\in v^+(p)$ and $w'\in v^-(p)$ for any $w'\in R(w_2)$.

Now, $\mathfrak{M},w_0\vDash^+\blacktriangledown p$ and $\mathfrak{M},w_0\nvDash^-\blacktriangledown p$, whence $\mathfrak{M},w_0\nvDash^+\blacktriangle p$ and $\mathfrak{M},w_0\vDash^-\blacktriangle p$. But observe that $\mathfrak{M},w_1\vDash^+\blacktriangle p$ and $\mathfrak{M},w_1\nvDash^-\blacktriangle p$. Thus, since $w_0\in R(w_0)$, we have that $\mathfrak{M},w_0\vDash^-\blacktriangle\blacktriangle p$ and $\mathfrak{M},w_0\nvDash^+\blacktriangle\blacktriangle p$.

\fbox{$\mathbf{F}$ frames}

We show that $\mathfrak{F}\in\mathbf{F}$ iff $\blacktriangledown p\vdash\blacktriangle p$ is valid on $\mathfrak{F}$.

Assume that $\mathfrak{F}$ is partial-functional and let $\mathfrak{M}$ be a~model on $\mathfrak{M}$ s.t.\ $\mathfrak{M},w\vDash^+\blacktriangledown p$. Then there must be exactly one $w'$ s.t.\ $wRw'$. Hence, $\mathfrak{M},w'\vDash^+p$ and $\mathfrak{M},w'\vDash^-p$. But then $\mathfrak{M},w\vDash^+\blacktriangle p$ and $\mathfrak{M},w\vDash^-\blacktriangle p$, as required.

For the converse, assume that $\mathfrak{F}$ is not partial-functional. Thus, there is $w$ s.t.\ $\{w',w''\}\subseteq R(w)$. We set the valuations as follows: $w'\in v^+(p)$, $w'\notin v^-(p)$, $w''\notin v^+(p)$, and $w''\in v^-(p)$. Thus, $\mathfrak{M},w\vDash^+\blacktriangledown p$ and $\mathfrak{M},w\nvDash^+\blacktriangle p$, as required.

\fbox{$\mathbf{Ver}$ frames}

We show that $\mathfrak{F}\in\mathbf{Ver}$ iff $\blacktriangle p$ is valid on $\mathfrak{F}$.

Let $R(w)=\varnothing$ for every $w\in\mathfrak{F}$. Then $\mathfrak{M},w\vDash^+\blacktriangle p$ and $\mathfrak{M},w\nvDash^-\blacktriangle p$. For the converse, let $R(w)\neq\varnothing$ for some $w\in\mathfrak{F}$. We define $w'\notin v^+(p)$ and $w'\notin v^-(p)$ for every $w'\in R(w)$. Thus, $\mathfrak{M},w\nvDash^+\blacktriangle p$.

\fbox{$\mathbf{1}$ frames}

We show that $\mathfrak{F}$ is coreflexive iff $p\vee\neg p\vdash\blacktriangle p$ is valid on $\mathfrak{F}$. Assume that $\mathfrak{F}\in\mathbf{1}$, that $\mathfrak{M}$ is a~model on $\mathfrak{F}$, and that $\mathfrak{M},w\vDash^+p\vee\neg p$. If $R(w)=\varnothing$, $\mathfrak{M},w\vDash^+\blacktriangle p$, as well. If $R(w)=\{w\}$, then we have two cases.
\begin{enumerate}
\item If $\mathfrak{M},w\vDash^-p\vee\neg p$, then $\mathfrak{M},w\vDash^+\blacktriangle p$ and $\mathfrak{M},w\vDash^-\blacktriangle p$.
\item If $\mathfrak{M},w\nvDash^-p\vee\neg p$, then $\mathfrak{M},w\vDash^+\blacktriangle p$ and $\mathfrak{M},w\nvDash^-\blacktriangle p$.
\end{enumerate}

For the converse, assume that $w'\in R(w)$ for some $w\in\mathfrak{F}$ and $w\neq w'$. We set $w\in v^+(p)$, $w\notin v^-(p)$, $w'\notin v^+(p)$, and $w'\notin v^-(p)$. It is clear that $\mathfrak{M},w\vDash^+p\vee\neg p$ but $\mathfrak{M},w\nvDash^+\blacktriangle p$.
\end{proof}
\begin{remark}
It is important to note that $\blacktriangle p\vdash\blacktriangle\blacktriangle p$ \emph{does not define} transitive frames. Neither $\blacktriangledown p\vdash\blacktriangle\blacktriangle p$ defines Euclidean frames.

Indeed, one sees in fig.~\ref{fig:4counterexample} that $\mathfrak{M},w\!\vDash^+\!\blacktriangle p$, $\mathfrak{M},w\!\vDash^-\!\blacktriangle p$, $\mathfrak{M},w'\!\vDash^+\!\blacktriangle p$, and $\mathfrak{M},w'\!\vDash^-\!\blacktriangle p$. But $\mathfrak{M},w''\!\vDash^+\!\blacktriangle p$ and $\mathfrak{M},w''\!\nvDash^-\!\blacktriangle p$. Hence, $\mathfrak{M},w\!\nvDash^+\!\blacktriangle\blacktriangle p$ and $\mathfrak{M},w\!\vDash^-\!\blacktriangle\blacktriangle p$.
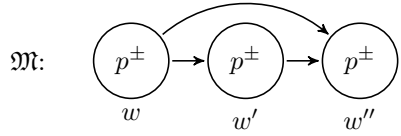
\begin{figure}
\centering
\begin{tikzpicture}[modal,node distance=0.5cm,world/.append style={minimum
size=1cm}]
\node[world] (w) [label=below:{$w$}] {$p^\pm$};
\node[world] (w') [right=of w] [label=below:{$w'$}] {$p^\pm$};
\node[world] (w'') [right=of w'] [label=below:{$w''$}] {$p^\pm$};
\node[] [left=of w] {$\mathfrak{M}$:};
\path[->] (w) edge (w');
\path[->] (w') edge (w'');
\path[->] (w) edge[bend left=40] (w'');
\end{tikzpicture}
\caption{$\blacktriangle p\vdash\blacktriangle\blacktriangle p$ does not define transitive frames.}
\label{fig:4counterexample}
\end{figure}

For the Euclidean frames, consider the counterexample in fig.~\ref{fig:5counterexample}.
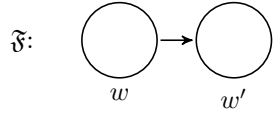
\begin{figure}
\centering
\begin{tikzpicture}[modal,node distance=0.5cm,world/.append style={minimum
size=1cm}]
\node[world] (w) [label=below:{$w$}] {};
\node[world] (w') [right=of w] [label=below:{$w'$}] {};
\node[] [left=of w] {$\mathfrak{F}$:};
\path[->] (w) edge (w');
\end{tikzpicture}
\caption{$\blacktriangledown p\vdash\blacktriangle\blacktriangle p$ does not define Euclidean frames.}
\label{fig:5counterexample}
\end{figure}
It is clear that $\mathfrak{F}$ is not Euclidean. However, since $R(w')=\varnothing$, we have that $\mathfrak{M},w'\vDash^+\blacktriangle p$ and $\mathfrak{M},w'\vDash^-\blacktriangle p$ for any $\mathfrak{M}$ on $\mathfrak{F}$. But then, $\mathfrak{M},w'\vDash^+\blacktriangle\blacktriangle p$ and $\mathfrak{M},w'\vDash^-\blacktriangle\blacktriangle p$ for any $\mathfrak{M}$ on $\mathfrak{F}$. Thus, $\blacktriangledown p\vdash\blacktriangle\blacktriangle p$ is valid on $\mathfrak{F}$.
\end{remark}

We finish the section with one more remark on the differences between $\AFDE$ and $\mathit{LET}_F$.
\begin{remark}\label{rem:LETFcomparison2}
Previously (recall Remark~\ref{rem:LETFcomparison1}), we observed some differences between $\AFDE$ and $\mathit{LET}_F$. However, since $\mathit{LET}_F$ is defined over \emph{partially ordered} frames while $\AFDE$ over all frames, one might rightfully wonder whether these two logics coincide on \emph{partially ordered} frames or $\mathbf{S4}$ frames.

The answer to this is negative. First of all, $\mathfrak{M}'$ seen on fig.~\ref{fig:Trivexample} shows that even on partially ordered frames there are no $\AFDE$ valid formulas (recall that $\circ\phi\vee\bullet\phi$ is $\mathit{LET}_F$ valid). Likewise, while $\circ p\wedge p\wedge\neg p\vdash q$ and $\circ p\wedge\bullet p\vdash q$ are $\mathit{LET}_F$ valid, no sequent of the form $\phi\vdash q$ is $\AFDE$ valid on a partially ordered frame provided that $\phi$ does not contain $q$. Indeed, it is easy to see on model $\mathfrak{M}$ on fig.~\ref{fig:trivialisationcounterexample}.
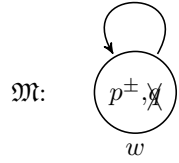
\begin{figure}
\centering
\begin{tikzpicture}[modal,node distance=0.5cm,world/.append style={minimum
size=1cm}]
\node[world] (w) [label=below:{$w$}] {$p^\pm$,$\xcancel{q}$};
\node[] [left=of w] {$\mathfrak{M}$:};
\path[->] (w) edge[reflexive] (w);
\end{tikzpicture}
\caption{The values of all variables in $\phi$ are the same and exemplified by $p$.}
\label{fig:trivialisationcounterexample}
\end{figure}
One can check by induction on $\phi$ that $\mathfrak{M},w\vDash^+\phi$ but $\mathfrak{M},w\nvDash^+q$.
\end{remark}
\section{Concluding remarks\label{sec:conclusion}}
In this paper, we have presented an expansion of First Degree Entailment with a~non-contingency modality~$\blacktriangle$. We interpreted this modality using Kripke semantics as ‘$\phi$ has the same truth value in all accessible states’. Let us briefly recapitulate the main results of the paper.
\begin{itemize}
\item In section~\ref{sec:proofsystem}, we presented an analytical cut calculus for $\AFDE$ and proved its soundness and completeness (theorems~\ref{AFDEsoundness} and~\ref{AFDEcompleteness}).
\item In section~\ref{sec:expressivity}, we proved that $\blacktriangle$ cannot be defined in terms of $\Box$ from $\mathbf{K_{FDE}}$, nor can $\Box$ be defined in terms of $\blacktriangle$ (theorems~\ref{theorem:trianglenotdefinable} and~\ref{theorem:boxnotdefinable}).
\item In section~\ref{sec:framedefinability}, we established (Theorem~\ref{AFDEcorrespondence}) that in contrast to the classical non-contingency logic, we can define several first-order properties on the frames, most importantly, reflexive, preordered ($\mathbf{S4}$), and $\mathbf{S5}$ frames.
\end{itemize}

Still, several questions remain open.

First and foremost, it is unclear whether we can define any other interesting first-order frame properties such as symmetry or seriality. As we have already mentioned above, the negative result for the classical non-contingency logic is due to the fact that classical $\triangle$-formulas cannot distinguish between different functional frames. These are, however, distinguishable by $\Ltriangle$ sequents. Likewise, it is unclear which classes of frames definable in $\mathbf{K_{FDE}}$ are not definable in $\AFDE$.

Second, we have devised an analytic cut calculus in this paper. One of the advantages of these calculi is that they can be used to provide a~decision procedure and complexity evaluation for the logic. Thus, it would be instructive to establish the complexity of $\AFDE$ validity. In fact, to the best of our knowledge, there are no results establishing the complexity of the \emph{classical} non-contingency logics. So, it makes sense, to compare the complexities of $\AFDE$ and the classical non-contingency logics. A promising starting point would be to expand the approach from~\cite{DAgostinoSolares-Rojas2022} to the case of $\AFDE$.

Third, and related\footnote{We thank an anonymous reviewer for bringing this to our attention.} to the previous point is that $\mathbb{S}(\AFDE)$ can be used to formulate the semantics of $\AFDE$ in terms of ‘imprecise values’ that can be associated to the value-labels. Furthermore, in this way, $\mathbb{S}(\AFDE)$ is a labelled extension of $\mathbf{RE}_{\mathrm{fde}}$~\cite{DAgostino1990,DAgostinoSolares-Rojas2022} in the sense of~\cite{DAgostinoGabbay1994}. We leave a more detailed consideration and motivation of this approach for future work.

Finally, we used the language without implication. There are many implicative expansions of the First Degree Entailment, for example with constructive implications (cf.~\cite{Wansing2008}) which also use frame semantics. There are modal expansions of such logics with $\Box$ (cf., e.g.~\cite{Wansing2004}). However, to the best of our knowledge, there are no expansions of these logics with non-contingency modalities.
\section*{Acknowledgements}
The first author's research was supported by grant ANR JCJC 2019, project PRELAP (ANR-19-CE48-0006).

The authors are grateful to two anonymous reviewers and the handling editor whose comments and suggestions enhanced the presentation and content of the paper.
\end{spacing}
\bibliographystyle{plain}
\bibliography{lit}
\end{document}